\theoremstyle{plain}
\newtheorem{thm}{Theorem}[section]
\newtheorem*{thm*}{Theorem}
\newtheorem*{cor*}{Corollary}
\newtheorem*{defn*}{Definition}
\newtheorem{lem}[thm]{Lemma}
\newtheorem{cor}[thm]{Corollary}
\newtheorem*{claim*}{Claim}
\theoremstyle{definition}
\newtheorem{defn}[thm]{Definition}
\newtheorem{ex}[thm]{Example}
\newtheorem{rem}[thm]{Remark}
\theoremstyle{remark}
\numberwithin{equation}{thm}
\def\a{\mathfrak a}
\def\m{\mathfrak m}
\def\fn{\mathbb N}
\newcommand{\calD}{\mathcal{D}}
\def\a_i{\underline {a_i}}
\def\Ass{\mathrm{Ass}}
\def\supp{\mathrm{Supp}}
\def\Irr{\mathrm{Irr}}
\def\card{\mathrm{card}}
\def\astab{\mathrm{astab}}
\def\dstab{\mathrm{dstab}}
\begin{document}

\author{ Marcel
Morales}
\address{Universit\'e  Grenoble Alpes, Institut Fourier, 
UMR 5582, B.P.74,
38402 Saint-Martin D'H\`eres Cedex,
(FRANCE)}
\email{ marcel.morales@univ-grenoble-alpes.fr}

\author{Nguyen Thi Dung}
\address{Thai Nguyen University of Agriculture and Forestry, Thai Nguyen, Vietnam}
\email{nguyenthidung@tuaf.edu.vn}
\title[Make a graph factor-critical, dstab(G), astab(G)]{Factor-critical graphs and dstab, astab for an edge ideal}
\thanks{2010 {\em Mathematics Subject Classification:} Primary: 05C25, Secondary 13F55, 05C69.}
\thanks{{\em Key words and phrases:} Irreducible decomposition, primary decomposition, Associated prime, edge ideal, graph, bridgeless graph, ear decomposition, factor-critical graph, matching-critical.}
\thanks{  This research was supported partially by Vietnam National Foundation for Science and Technology Development 
(NAFOSTED) under grant number 101.04-2017.14, the project under grant number T2019-01-DH, Institut Fourier, Grenoble, France and VIASM Vietnam}
\begin{abstract} Let $G$ be a simple, connected non bipartite graph and let $I_G$ be the edge ideal
of $G$. In our previous work we showed that L. Lov\'{a}sz's theorem on ear decompositions of
factor-critical graphs and the canonical decomposition of a graph given by Edmonds and Gallai are basic tools for the irreducible decomposition of $I^{k}_G$. In this paper  we use some tools from graph theory, mainly Withney's theorem on ear decompositions of 2-edge connected graphs in order to introduce a new method to make a  graph  factor-critical. We can describe the set $\cup_ {k=1}^{\infty}\Ass (I^{k}_G) $   in terms of some subsets of $G$. We give  explicit formulas for the numbers $\astab(I_G)$ and  $\dstab(I_G)$, which are, respectively, the smallest number  $k$ such that  $\Ass (I^{k}_G)=\Ass (I^{k+i}_G)$ for all $i\geq 0$  and the smallest number $k$  such that the maximal ideal belongs to $ \Ass (I^{k}_G)$. We also give very simple upper bounds for  $\astab(I_G)$ and $\dstab(I_G)$.
\end{abstract}
\maketitle
\tableofcontents

\section{Introduction}  In this paper we will study the sets of associated  prime ideals
of the powers of the edge ideal $I_G\subset R:=K[x_1,\ldots,x_d]$ of a graph $G=(V,E)$ with $V=\{P_1,\ldots,P_d\}$. There are many articles on this topic, such as \cite{B},\cite{CMS},\cite{FHV2},\cite{HM},\cite{MMV},\cite{MD} and \cite{T}.
In \cite{B}, Brodmann showed that when $R$ is a Noetherian ring and $I$ is an ideal
of $R$, the sets $\Ass (I^{k}_G)$ stabilize for large $k$. In \cite{CMS} a method is given to construct   associated primes of the powers of an edge ideal is given. 
In \cite{MMV} it was proved that for edge ideals   the sets of associated primes of the powers of $I$ form an ascending chain, known as the persistence property. In \cite{MD} we proved the strong persistence property, that is an irreducible primary component of a power  $I^{k}_G$ induces an irreducible primary component in $I^{k+1}_G$ with the same associated prime.
In \cite{MD} we describe the set of irreducible primary components of $I^{k}_G$ in terms of factor-critical sets and the canonical decomposition of a graph given by Edmonds and Gallai (see for example \cite{E}, \cite{G1}, \cite{G2},\cite{L},\cite{L1},\cite{LP}).\\
 Ear decomposition is an important tool in our work (see \cite[Chapter 5]{YL}); the key result is Withney's theorem which states that a graph if 2-edge connected if and only if it has an ear decomposition. \\ 
In Section 2 we introduce the work of A.Frank \cite{F} and Z.Szigeti \cite{S} (see also \cite[Chapter 5]{YL}), who describe the optimal way to make a 2-edge connected graph factor-critical. We  extend their work by introducing a generalized ear decomposition for a connected graph and  two invariants $\varphi (G)$, the minimum number of even ears in a generalized ear decomposition, and $\psi (G)$   the number of bridges. \\
 In Section 3, inspired by  A.Frank \cite{F}, Z.Szigeti \cite{S} and  Lemma 4.7 in our paper \cite{MD}, we can describe a new method using duplication of vertices in a non bipartite graph (after A. Schrijver \cite{Sc}) and a generalized ear decomposition to obtain factor-critical graphs with the minimum number of steps. \\ 
In Section 4, by using the description of  the irreducible primary components of $I_G^k$ in terms of matching-critical graphs  given in \cite{MD} and  the method developed in Section 3, we can  describe the set of associated primes of each power of the edge ideal $I_G$. Our result extends Proposition 3.3 of \cite{MMV}, where the case of $\m^{ {\bf 1 }_V}$ was considered, and theorem 4.1 of \cite{CMS}. As an application we compute exactly the smallest number number  $k$, called $\astab(I_G)$, where $\Ass (I^{k}_G)=\Ass (I^{k+i}_G)$ for all $i\geq 0$, and the smallest number number  $k$, called  $\dstab(I_G)$,  where the maximal ideal belongs to $ \Ass (I^{k}_G)$.\\
To be more precise let us introduce some notations and definitions. For any vector   ${\bf a }=(a_1,\ldots,a_d)\in \fn^d$ we set $\m ^{{\bf a}}$ the monomial ideal generated by $x_i^{a_i}$. For any subset $U\subset V$ we set ${{\bf 1}_{U}}$  the vector whose $i-$coordinate is 1 if $P_i\in U$ and $0$ otherwise. We will denote by $G_U$ the induced subgraph of $G$ with vertices in $U$.
 Our first result is:
 \begin{thm*}\label{th-astab-primes} Let $G$ be a simple, connected non bipartite graph. Then $\m ^{{\bf 1}_{U}} \in \Ass (I^{\astab(I_G)}_G) $ if and only if $Z:= V\setminus  U$ is a coclique set and either $U=N(Z)$ or $U\not=N(Z)$   and every connected component of $G_{ U\setminus N(Z)}$ contains an odd cycle.
 \end{thm*}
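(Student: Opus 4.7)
The plan is to reduce the characterization to the description of irreducible primary components of $I_G^k$ in terms of matching-critical subgraphs from our previous work \cite{MD}, and then apply the generalized ear decomposition and vertex-duplication machinery developed in Sections 2 and 3. Throughout, I use the persistence property from \cite{MMV} together with the strong persistence property from \cite{MD}: once a prime is associated at some power, it is associated at every larger power, and hence membership in $\Ass(I_G^{\astab(I_G)})$ coincides with membership in $\bigcup_{k\geq 1}\Ass(I_G^k)$.

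For necessity, any prime associated to $I_G^k$ contains $\sqrt{I_G^k}=\sqrt{I_G}$, hence contains $I_G$; equivalently $U$ is a vertex cover, so $Z=V\setminus U$ is a coclique. Assuming $\m^{{\bf 1}_U} \in \Ass(I_G^{\astab(I_G)})$ with $U\neq N(Z)$, I would invoke the description from \cite{MD} of such an embedded prime as arising from a matching-critical configuration supported on $U$, in which the vertices of $Z$ attached to $N(Z)$ provide the \emph{external} pairing partners. Restricting this configuration to $U\setminus N(Z)$ (whose vertices have no neighbour in $Z$) leaves a disjoint union of factor-critical subgraphs, one per connected component of $G_{U\setminus N(Z)}$; since every factor-critical graph is non-bipartite, each such component must contain an odd cycle.

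For sufficiency, assuming the two conditions hold, I would build a matching-critical configuration on $U$ which, via \cite[Lemma 4.7]{MD}, produces an irreducible primary component of some $I_G^k$ with radical $\m^{{\bf 1}_U}$. On each connected component of $G_{U\setminus N(Z)}$, the assumed odd cycle serves as the starting odd ear of a generalized ear decomposition, which by Section 3 yields a factor-critical subgraph. On $N(Z)$, apply the Schrijver-type vertex duplication of Section 3, using the vertices of $Z$ as the external partners that absorb the parity on the attached side; in the special case $U=N(Z)$ this step alone produces the required configuration. Stitching the two regimes together through the generalized ear decomposition realises a matching-critical subgraph on $U$. Strong persistence then places $\m^{{\bf 1}_U}$ in $\Ass(I_G^{k'})$ for all $k'\geq k$, and in particular in $\Ass(I_G^{\astab(I_G)})$.

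The main obstacle will be the sufficiency direction: correctly combining the two kinds of construction — vertices in $N(Z)$ handled by duplication against $Z$, and components of $G_{U\setminus N(Z)}$ handled by internal ear decomposition — into a single configuration that meets the hypotheses of \cite[Lemma 4.7]{MD}. A secondary delicate point is verifying that each odd cycle inside $U\setminus N(Z)$ can genuinely be incorporated as a starting ear compatible with the duplications performed on $N(Z)$; this is where the invariants $\varphi(G)$ and $\psi(G)$ of Section 2 enter to control the power $k$ at which the primary component first appears.
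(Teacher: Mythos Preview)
Your overall reduction---persistence so that $\Ass(I_G^{\astab(I_G)})=\bigcup_k\Ass(I_G^k)$, then the $U=N(Z)$ case as a minimal prime, then the \cite{MD} description of embedded primes via matching-critical replications---is exactly the paper's framework. But both directions of your plan for the embedded case diverge from what actually works.

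\textbf{Necessity.} You write that restricting the matching-critical configuration to $U\setminus N(Z)$ ``leaves a disjoint union of factor-critical subgraphs, one per connected component of $G_{U\setminus N(Z)}$.'' That is not what the \cite{MD} data give you. After the reduction of Theorem~\ref{treduction1} one has $\supp({\bf a})\cap N(Z)=\emptyset$, and then Theorem~\ref{t22newbis20} says the matching-critical part $D(p_{\bf a}(G))$ has support some $W\subset U\setminus N(Z)$, together with the domination condition $V=N_G(W)\cup Z\cup N(Z)$. The set $W$ can be strictly smaller than $U\setminus N(Z)$, and its factor-critical components need not be in bijection with the components of $G_{U\setminus N(Z)}$. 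The missing step (this is the heart of the paper's argument in Theorem~\ref{Tembass}) is: from the domination condition one gets $U\setminus N(Z)=W\cup T$ with $T\subset N(W)\setminus W$, so every vertex of $T$ attaches to a component of $G_W$; hence every component of $G_{U\setminus N(Z)}$ contains a component of $G_W$ and therefore inherits an odd cycle. Without this $W\cup T$ observation your necessity argument does not close.

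\textbf{Sufficiency.} Your plan to build something separately on $N(Z)$ ``using the vertices of $Z$ as external partners'' and then to stitch the two regimes together is not needed, and is the source of the obstacle you flag at the end. The paper's route is much shorter: take $W=U\setminus N(Z)$ itself. Each component of $G_W$ contains an odd cycle by hypothesis, so Lemma~\ref{l-makefc_min} produces a vector ${\bf b}$ with $p_{{\bf b}+{\bf 1}_W}(G)$ matching-critical. Since $W$ has no isolated vertices, $W\subset N(W)$, whence $V=N(W)\cup N(Z)\cup Z$ holds automatically; now Theorem~\ref{t22newbis20} applies with $C=A=\emptyset$ and gives $\m^{{\bf a}+{\bf 1}_U}\in\Irr(I_G^k)$ for $k=\nu(p_{{\bf b}+{\bf 1}_W}(G))+1$. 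Nothing has to be constructed on $N(Z)$: the $N(Z)$ part of the irreducible component is accounted for by the ${\bf 1}_U$ exponent, not by the matching-critical data. So the ``stitching'' problem you anticipate simply does not arise.
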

\begin{defn*}Let $G=(V,E)$ be a simple, connected non bipartite graph. Let $U\subset V, Z=V\setminus U$. If $Z$ is not an independent set we set  $ \calD^*(U)= \emptyset $. From now on we assume that  $Z$ is an independent set. A set $W\subset V\setminus (Z\cup N(Z))$ is called {\it $U$-dominant}  
if $V=N(W)\cup N(Z)\cup Z$.  $W$ is called $U$-dominant* if either $W=\emptyset$ or it is $U$-dominant and every connected component of $G_W$ contains an odd cycle.  We will denote by $\calD^*(U)$ the set of all subsets $W\subset U$ that are $U$-dominant*.\\
For every $W\in \calD^*(U) $  we define $\nu ^*(G_W)= \frac{1}{2}(\card(W)+\varphi (G_W)+\psi (G_W)-t_W)$,  where   $t_W$ is the number of connected components of $G_W$. 
 \end{defn*}
First we describe $\Ass (I^{k}_G) $ for every $k\geq 1$.
\begin{thm*}\label{th-astab-intro} Let $G$ be a simple, connected non bipartite graph, then
$$\Ass (I^{k}_G) =\{ \m ^{{\bf 1}_{U}} \mid U\subset V, \calD^*(U)\not=\emptyset,\min_{ W\in \calD^*(U) }\nu ^*(G_W)\leq   k-1  \}.$$
 \end{thm*}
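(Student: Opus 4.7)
The plan is to combine the description of the irreducible primary components of $I_G^k$ in terms of matching-critical subgraphs established in our earlier paper \cite{MD} with the generalized ear decomposition and factor-critical construction developed in Sections~2 and~3. By the (strong) persistence property proved in \cite{MD}, $\m^{\mathbf{1}_U}\in\Ass(I_G^k)$ if and only if there exists an irreducible primary component of $I_G^k$ whose radical is $\m^{\mathbf{1}_U}$, and once this holds for some $k_0$ it continues to hold for every $k\geq k_0$. Hence the theorem amounts to showing that, for each admissible $U$, the smallest exponent at which $\m^{\mathbf{1}_U}$ first appears as an associated prime is exactly $\min_{W\in\calD^*(U)}\nu^*(G_W)+1$.

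For the inclusion $\supseteq$, given $U$ with $\calD^*(U)\neq\emptyset$ and $W\in\calD^*(U)$ with $\nu^*(G_W)\leq k-1$, I would construct an explicit $\m^{\mathbf{1}_U}$-primary component of $I_G^k$ by following the recipe of Section~3: apply duplication of vertices together with the generalized ear decomposition to transform $G_W$ into a factor-critical graph, keeping track of the $\varphi(G_W)$ even ears, the $\psi(G_W)$ bridges, and the $t_W$ connected components. The odd-cycle condition on each component of $G_W$ (built into the definition of $\calD^*(U)$) guarantees that this transformation is possible, and the dominance condition $V=N(W)\cup N(Z)\cup Z$ guarantees that the support of the resulting monomial is exactly $U$, so that its radical is $\m^{\mathbf{1}_U}$. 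The degree contributed by the interior of $G_W$ is $\tfrac12(\card(W)+\varphi(G_W)+\psi(G_W)-t_W)=\nu^*(G_W)$, to which the edges joining $W$ to $N(Z)$ add the remaining units, producing a generator of degree at most $k$ lying inside $I_G^k$ with the required radical.

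For the inclusion $\subseteq$, assume $\m^{\mathbf{1}_U}\in\Ass(I_G^k)$. First $Z:=V\setminus U$ must be a coclique, since $\m^{\mathbf{1}_U}\supseteq I_G$. From the primary component description of \cite{MD} one extracts a matching-critical substructure attached to the component; I would take $W$ to be its restriction to $U\setminus N(Z)$. Minimality of the primary component forces the dominance $V=N(W)\cup N(Z)\cup Z$, and the Gallai--Edmonds / Lov\'asz theory together with the matching-critical condition forces every connected component of $G_W$ to contain an odd cycle, so that $W\in\calD^*(U)$. The exponent carried by the primary component then equals $\nu^*(G_W)+1$ (by the same degree count as in the other direction, now read backwards), yielding $k\geq \nu^*(G_W)+1\geq \min_{W'\in\calD^*(U)}\nu^*(G_{W'})+1$.

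The hard part is the precise matching between the algebraic exponent of a primary generator and the combinatorial invariant $\nu^*(G_W)$. In the factor-critical case $\varphi(G_W)=\psi(G_W)=0$ and $t_W=1$ this reduces to the classical matching number, but for a general $W\in\calD^*(U)$ one must prove that the generalized ear construction of Section~3 is optimal, in the sense that no strictly smaller exponent suffices to produce an $\m^{\mathbf{1}_U}$-primary component. This optimality is where Whitney's theorem on $2$-edge-connected graphs, combined with the Frank--Szigeti sharp bounds on the minimum number of duplications needed to make a graph factor-critical, enter decisively and form the technical core of the argument.
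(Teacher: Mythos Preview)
Your plan is essentially the paper's own route: both inclusions rest on the characterization of embedded irreducible components of $I_G^k$ from \cite{MD} (restated here as Theorems~\ref{treduction1} and~\ref{t22newbis20}) together with the optimality statement Theorem~\ref{th-makefc}. The paper packages the computation as Theorem~\ref{th-astab}, giving $\astab(I_G,U)=1+\min_{W\in\calD^*(U)}\nu^*(G_W)$, after which the present statement is the one-line Corollary~\ref{c-astab} via persistence.

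Two points of wording in your outline deserve tightening. In the $\supseteq$ direction you speak of producing ``a generator of degree at most $k$ lying inside $I_G^k$''; what one actually produces is an irreducible \emph{component} $\m^{\mathbf a+\mathbf 1_U}$ of $I_G^{k_0}$ with $k_0=\nu^*(G_W)+1$ (this is exactly Theorem~\ref{t22newbis20} applied to the matching-critical replication furnished by Lemma~\ref{l-makefc_min}); the passage to $I_G^k$ for $k\geq k_0$ is then persistence, or equivalently padding $\mathbf a$ along $N(Z)$ as in Theorem~\ref{treduction1} read in reverse. In the $\subseteq$ direction, the exponent attached to the extracted component is $\nu(p_{\mathbf a}(G))+1$, which by Lemma~\ref{l-makefc} is only $\geq\nu^*(G_W)+1$, not equal to it; the inequality is all you need, and is precisely the ``optimality'' you correctly flag as the technical core.
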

 Then we give explicit formulas for $\astab(I_G), \dstab(I_G)$.
 \begin{thm*}\label{astab-main-intro} Let $G$ be a simple, connected non bipartite graph. We have 
$$ \astab(I_G)=\max_{\{U\subset V \mid  \calD^*(U)\not=\emptyset \}}\{\min \{ 1+\nu ^*(G_W) \mid W \in \calD^*(U)\}  \}.$$
$$ \dstab(I_G)=\min \{ 1+\nu ^*(G_W)\mid W \in \calD^*(V)\}.$$ 
\end{thm*}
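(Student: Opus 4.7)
The plan is to derive both formulas directly from the preceding description of $\Ass(I_G^k)$ for all $k$, combined with the persistence property established in \cite{MMV} and \cite{MD}.

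First, I would handle $\dstab(I_G)$. By definition, $\dstab(I_G)$ is the smallest $k$ such that the maximal ideal $\mathfrak{m} = \mathfrak{m}^{\mathbf{1}_V}$ belongs to $\Ass(I_G^k)$; this corresponds to the choice $U = V$, whence $Z = V\setminus U = \emptyset$. Vacuously $Z$ is independent, $N(Z) = \emptyset$, and the $U$-dominant* condition reduces to: every connected component of $G_W$ contains an odd cycle and $N(W) = V$. Since $G$ is connected and non-bipartite, $W = V$ satisfies this, so $\calD^*(V)\neq\emptyset$. Applying the previous theorem with $U = V$, $\mathfrak{m}\in\Ass(I_G^k)$ iff $\min_{W\in\calD^*(V)}\nu^*(G_W)\leq k-1$, so the smallest such $k$ is exactly $1 + \min\{\nu^*(G_W) \mid W\in\calD^*(V)\}$.

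Next I would handle $\astab(I_G)$. Recall that the persistence property (see \cite{MMV}, \cite{MD}) gives an ascending chain
\[
\Ass(I_G) \subseteq \Ass(I_G^2) \subseteq \Ass(I_G^3) \subseteq \cdots
\]
Consequently $\astab(I_G)$ is simply the smallest $k$ for which every prime that ever appears in some $\Ass(I_G^j)$ already belongs to $\Ass(I_G^k)$. From the description of $\Ass(I_G^k)$, the monomial prime $\mathfrak{m}^{\mathbf{1}_U}$ appears in some power if and only if $\calD^*(U)\neq\emptyset$, and in that case the smallest power in which it appears is
\[
k_U := 1 + \min\{\nu^*(G_W) \mid W\in\calD^*(U)\}.
\]
Therefore $\astab(I_G) = \max\{k_U \mid U\subseteq V,\ \calD^*(U)\neq\emptyset\}$, which is precisely the claimed formula.

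The key ingredient that makes both formulas essentially automatic is the previous theorem characterizing $\Ass(I_G^k)$ via the invariant $\nu^*$; once that is in hand, both statements are purely combinatorial extractions. There is no real obstacle beyond verifying that $\calD^*(V)\neq\emptyset$ (ensuring $\dstab(I_G)$ is finite and well-defined) and citing the persistence chain to guarantee that the maximum appearing in the $\astab$-formula is attained and equals the stabilization index; finiteness of the maximum follows because there are only finitely many subsets $U\subseteq V$.
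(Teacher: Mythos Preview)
Your proposal is correct and follows essentially the same approach as the paper: both formulas are read off directly from the description of $\Ass(I_G^k)$ (Corollary~\ref{c-astab}) together with Theorem~\ref{th-astab}, using that $\dstab(I_G)=\astab(I_G,V)$ and, via persistence, that $\astab(I_G)=\max_U \astab(I_G,U)$. The paper's proof is a two-line remark to this effect; your version merely spells out the same reasoning in more detail, including the verification that $\calD^*(V)\neq\emptyset$.
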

Then we can derive a very simple upper bound for $\astab(I_G),\astab(I_G)$:
\begin{thm*}\label{bound-a-d-stab-intro} Let $G$ be a simple connected non bipartite graph. 
$$ \dstab(I_G) \leq \frac{1}{2} (\card(E(G)) + \psi(G)) + 1,$$
$$ \astab(I_G)\leq \card(E(G))-k+1,$$
where $2k-1$ is the minimum length of an odd cycle in $G$.
 \end{thm*}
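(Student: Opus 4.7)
Both inequalities are derived from the explicit formulas of Theorem \ref{astab-main-intro} by bounding
$\nu^*(G_W)=\tfrac{1}{2}(\card(W)+\varphi(G_W)+\psi(G_W)-t_W)$
through the elementary vertex--edge count of a generalized ear decomposition.

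For the $\dstab$-bound, the natural choice is $W=V$: since $G$ is connected and non-bipartite, the unique component $G_V=G$ contains an odd cycle and $V$ is trivially $V$-dominant, so $V\in\calD^*(V)$. Theorem \ref{astab-main-intro} then yields
\[
\dstab(I_G)\le 1+\nu^*(G)=1+\tfrac{1}{2}(\card(V)+\varphi(G)+\psi(G)-1).
\]
I would then prove $\varphi(G)\le\card(E(G))-\card(V)+1$ by a double count in a generalized ear decomposition of $G$ starting at a single vertex: writing $b=\psi(G)$ for the number of length-$1$ bridge ears and $q$ for the number of non-bridge ears of lengths $\ell_1,\dots,\ell_q$, one has $b+\sum\ell_j=\card(E(G))$ and $b+\sum(\ell_j-1)=\card(V)-1$, hence $q=\card(E(G))-\card(V)+1$; since every even ear is a non-bridge ear, $\varphi(G)\le q$. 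Substituting gives the stated bound $\dstab(I_G)\le\tfrac{1}{2}(\card(E(G))+\psi(G))+1$.

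For the $\astab$-bound, I would prove the stronger statement that $1+\nu^*(G_W)\le\card(E(G))-k+1$ for \emph{every} admissible $W\in\calD^*(U)$, working one component of $G_W$ at a time. If $G_W^{(i)}$ has $n_i$ vertices and $m_i$ edges, it contains an odd cycle of length $\ge 2k-1$ (an odd cycle of the induced subgraph $G_W$ is an odd cycle of $G$). Choosing this odd cycle as the first closed ear in a generalized ear decomposition of $G_W^{(i)}$ produces at least one odd ear, which gives $\varphi_i\le (m_i-n_i+1)-1=m_i-n_i$. Moreover, the $\ge 2k-1$ edges of the chosen odd cycle are not bridges, so $\psi_i\le m_i-(2k-1)$. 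Combining,
\[
n_i+\varphi_i+\psi_i-1\le n_i+(m_i-n_i)+(m_i-2k+1)-1=2m_i-2k,
\]
and summing over the $t_W$ components yields $\nu^*(G_W)\le m(W)-k\,t_W\le\card(E(G))-k$. The max--min in Theorem \ref{astab-main-intro} is therefore $\le\card(E(G))-k+1$, as required.

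The only delicate point is to ensure that the conventions for generalized ear decompositions adopted in Section 2 (starting from one vertex per component, with bridges encoded as length-$1$ ears, and any prescribed odd cycle available as the first closed ear) match those used above. Once this is verified, both bounds reduce to the short arithmetic above; no further graph-theoretic input is needed beyond the elementary facts that cycle edges are never bridges and that an odd cycle placed at the beginning of an ear decomposition contributes one odd ear.
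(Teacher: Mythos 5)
Your argument is correct and follows essentially the same route as the paper's proofs of Theorems \ref{bound-a-d-stab} and \ref{bound-astab}: both bound $\varphi+\psi$ via the ear count of Lemma \ref{number-ears} and, for the $\astab$ inequality, exploit that the $\geq 2k-1$ edges of a shortest odd cycle are never bridges. The only cosmetic difference is that the paper factors the $\astab$ bound through the specific choice $W=U\setminus N(Z)$ and an intermediate inequality $\nu^*(G_{U\setminus N(Z)})\leq\tfrac12(\card(E(G_{U\setminus N(Z)}))+\psi(G_{U\setminus N(Z)})-1)$, whereas you bound $\nu^*(G_W)$ directly, component by component, for every admissible $W$.
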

We then  give an example of a graph with $2l+3$ vertices such that $\dstab(I_G)=l+1, \astab(I_G)=2l-2 $, proving that the difference between $\dstab(I_G), \astab(I_G)$ can be as large as possible.\\
Finally, in Section 5 we prove that every 2-edge connected non bipartite graph $G$ has an ear decomposition, with $\varphi (G)$ even ears, starting with an odd cycle.\\
Recall that the problem of determining dominant sets or independent sets in a graph is a NP-hard, but can be done by hand for a graph with small number of vertices.
\section{Preliminaries}  
Given a non-factor-critical
graph, how can we turn it into a factor-critical graph? How can we measure how
far a graph is from a factor-critical graph?  In \cite[Chapter 5]{YL} the authors consider
 three possible operations on 2-edge-connected graphs: contraction, subdivision and ear
decomposition. In this section we introduce these operations and extend  the results of Szigeti \cite{S}, who  showed that for  2-edge-connected graphs, all the three approaches  are  equivalent.

\begin{defn} (\cite {G3}) {\it An ear decomposition} $G_0, G_1, \ldots, G_k=G$ of a graph $G$ is a sequence of graphs where the
first graph $G_0$ being  a vertex, edge, even cycle, or odd cycle, and each graph $G_{i+1}$ is  obtained from $G_i$ by adding an ear.

Adding an ear is done as follows: take two vertices $a$ and $b$ of $G_i$ and add a path $F_i$ from $a$ to $b$ such that all vertices on the path except $a$ and $b$ are new vertices (present in $G_{i+1}$ but not in $G_i$). An ear with $a\ne b$ is called {\it  open}, otherwise, {\it closed}. An ear with $F_i$ having an odd (even) number of edges is called {\it odd (even)}.
The sequence of ears $F_0, F_1,\ldots , F_s,$ is also called an ear decomposition.
\end{defn}

From Withney's theorems, (see \cite[Chapter 5]{YL}), every 2-edge-connected graph has an ear
decomposition and by Lov\'{a}sz's   theorem \cite{L} a graph is factor-critical if and only if it has an  ear decomposition with odd ears.

\begin{thm}  \label{t-Withney} (Withney) A graph $G = (V, E)$ is 2-edge-connected if and only if has an ear decomposition.
 Furthermore $G$ can be constructed from any vertex $P$ by successively adding an ear to the  previously constructed graphs.
\end{thm}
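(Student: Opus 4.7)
The plan is to prove both implications of the equivalence and then address the starting-vertex statement. For the easier direction, I would argue by induction on the number of ears that if $G$ admits an ear decomposition $G_0, G_1, \ldots, G_k = G$, then $G$ is 2-edge-connected. The base case in which $G_0$ is a cycle is immediate (the degenerate vertex/edge cases are trivial). For the inductive step, suppose $G_i$ is 2-edge-connected and $G_{i+1}$ is obtained by attaching an ear $F_i$ from $a$ to $b$; to verify 2-edge-connectivity of $G_{i+1}$, remove any single edge $e$. If $e \in E(G_i)$ then $G_i - e$ is connected and every internal vertex of $F_i$ is reachable through the ear; if $e \in E(F_i)$ then the remaining portion of $F_i$ together with any $a$--$b$ path in $G_i$ still connects all vertices.

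For the non-trivial direction, I would start from an arbitrary vertex $P$ and extend iteratively. Since $G$ is 2-edge-connected, $P$ lies on a cycle (every edge of a 2-edge-connected graph lies in a cycle), which we take as $G_1$ by attaching a closed ear at $P$. The key claim is: for any 2-edge-connected subgraph $G_i \subsetneq G$ containing $P$, there is an ear in $G$ attachable to $G_i$. Two cases arise. If some edge $e = uv \in E(G) \setminus E(G_i)$ has both endpoints in $V(G_i)$, then $e$ itself is an open ear of length one. Otherwise, pick any edge $uv$ of $G$ with $u \in V(G_i)$ and $v \notin V(G_i)$, which exists because $G$ is connected and $V(G_i) \neq V(G)$. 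Using 2-edge-connectivity of $G$, the graph $G - uv$ remains connected, so there is a path in $G - uv$ from $v$ to $V(G_i)$; taking the shortest such path $v = v_0, v_1, \ldots, v_m = w$, the sequence $u, v_0, v_1, \ldots, v_m$ is an ear attachable to $G_i$, since by the minimality of $m$ all interior vertices $v_0, \ldots, v_{m-1}$ lie outside $V(G_i)$ and the corresponding edges lie outside $E(G_i)$.

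The main obstacle is verifying termination and completeness of this procedure. Termination is immediate because every ear contributes at least one edge to the construction and $E(G)$ is finite. Completeness follows because if the process stopped at $G_k \subsetneq G$, the claim above would produce a further ear, a contradiction; hence $V(G_k) = V(G)$ and $E(G_k) = E(G)$. The ability to start from any prescribed vertex is built in by initializing $G_0 = \{P\}$ and letting the first ear be a closed ear based at $P$. One subtle point to double-check is that each $G_i$ constructed this way is itself 2-edge-connected, so that the inductive hypothesis of the key claim is maintained; this follows from the forward direction applied to the partial decomposition built so far.
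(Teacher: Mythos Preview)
The paper does not prove this statement: it is quoted as Whitney's classical theorem with a reference to \cite[Chapter 5]{YL}, and no argument is given in the text. So there is no in-paper proof to compare against.

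Your proof is correct and is essentially the standard textbook argument. One small remark: in your key claim you hypothesize that the already-built subgraph $G_i$ is $2$-edge-connected, and at the end you go back to justify that this hypothesis is maintained. In fact your argument for the claim never uses $2$-edge-connectedness of $G_i$ at all; the only place $2$-edge-connectivity enters is for the ambient graph $G$, to guarantee that $G - uv$ is still connected. So the final ``subtle point'' paragraph is not needed for the logic of the proof (though the observation itself is true and harmless). Everything else---the case split on whether the stray edge has both endpoints in $V(G_i)$, the shortest-path trick to produce an ear with all interior vertices outside $V(G_i)$, the termination and completeness arguments, and the initialization at an arbitrary vertex $P$ via a closed ear---is clean and complete.
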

Note that 2-edge-connected  means bridgeless.
Since any connected graph $G$ can be written as a sequence of bridgeless graphs and bridges,  we immediately have the following result.
\begin{thm} {\bf and Definition.}
Every connected graph $G$ has a generalized ear decomposition, that is:  $F_0, F_1,\ldots , F_s,$ such that $F_0$ is a cycle or a bridge, and $F_i$ is either an ear, or a bridge. If $G$ is not a tree $F_0$ can be chosen to be a cycle, and   if $G$ is a non bipartite graph, $F_0$ can be chosen to be an odd cycle.
 \end{thm}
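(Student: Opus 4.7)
The plan is to combine the bridge/block structure of $G$ with repeated applications of Whitney's theorem (Theorem \ref{t-Withney}) and build the decomposition block by block. Let $B_1,\ldots,B_m$ denote the maximal 2-edge-connected subgraphs of $G$; each $B_j$ is either a single vertex or a bridgeless graph containing at least one cycle, these subgraphs are vertex-disjoint, and the remaining edges of $G$ are exactly the bridges. Contracting each $B_j$ to a single point produces a tree $T$, because every cycle of $G$ lies inside a single $B_j$.

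If $G$ is a tree, every $B_j$ is a single vertex and every edge is a bridge. Pick any edge of $G$ as $F_0$ and then add the remaining edges one at a time in BFS order from that starting edge; each new edge is attached to the current subgraph at one vertex and contributes one fresh vertex, hence appears as a bridge step.

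Suppose instead that $G$ is not a tree. Pick $B_{j_0}$ containing a cycle $C$, choosing $C$ to be odd when $G$ is non-bipartite; this is possible because every odd cycle of $G$, being bridge-free, lies entirely inside some single $B_j$. Set $F_0:=C$ and extend to an ear decomposition of $B_{j_0}$ using Whitney's theorem, starting from the 2-edge-connected subgraph $C$ rather than from a single vertex; this realizes $B_{j_0}$ as $F_0=C$ followed by ears $F_1,\ldots,F_{s_0}$. Next, root $T$ at $B_{j_0}$ and traverse the remaining blocks in BFS order. When the traversal reaches a new block $B'$ through a bridge $e$, append $e$ as a bridge step; if $B'$ is a singleton we move on, and otherwise we apply Whitney to $B'$ starting from the endpoint of $e$ that lies in $B'$ and append the resulting ears. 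Iterating over all blocks covers every vertex and every edge of $G$ and produces the required decomposition $F_0,F_1,\ldots,F_s$.

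The main obstacle is the variant of Whitney's theorem used in the third paragraph, which permits starting the ear decomposition of $B_{j_0}$ from a prescribed 2-edge-connected subgraph (in particular a prescribed odd cycle) rather than from a single vertex. The statement in Theorem \ref{t-Withney} is phrased only for starting from a vertex, but the inductive argument extends without change because each subsequent ear addition only requires the current intermediate subgraph to be 2-edge-connected, a property preserved by ear addition and enjoyed by $C$. Once this strengthening is available, the block-bridge tree structure makes the passage from one block to all of $G$ purely organizational, since the bridges of $G$ correspond exactly to the edges of $T$ and the BFS order guarantees that every bridge step is attached at a vertex already present.
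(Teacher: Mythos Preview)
Your argument is correct and follows the same underlying idea as the paper: decompose $G$ into its bridgeless pieces and bridges, apply Whitney's theorem to each bridgeless piece, and string the results together. The paper compresses this into a single sentence (``any connected graph $G$ can be written as a sequence of bridgeless graphs and bridges''), whereas you spell out the block--bridge tree and the BFS traversal explicitly, and you are careful to justify the one point the paper leaves implicit: that the ear decomposition of the initial block $B_{j_0}$ can be made to start from a prescribed (odd) cycle rather than just from a vertex, via the obvious strengthening of Whitney's inductive step.
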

 Let $\varphi (G)$ denote  the minimum  number of even ears in  generalized ear decompositions of $G$, note that if $G_1,G_2,\ldots,G_l$ are the 2-edge connected components of $G$, then  $\varphi (G)=\sum_{i=1} ^l \varphi (G_i)$. A generalized ear decomposition of $G$ with $\varphi (G)$ even ears is called optimal. The number of bridges  is an invariant, denoted by $\psi (G)$.  Clearly, a connected graph $G$ is factor-critical if and only if $\varphi( G)=\psi (G)=0$.
 \begin{lem} \label{number-ears}
The number of ears in a generalized ear decomposition of a connected graph $G$ is independent of the ear decomposition and is equal to     $\card(E(G))-\card(V(G))+ \psi(G)+1$.
 \end{lem}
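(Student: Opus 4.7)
The plan is to proceed by induction on the length $s+1$ of the decomposition, establishing the stronger inductive claim that for each intermediate graph $G_i := F_0 \cup F_1 \cup \cdots \cup F_i$ one has $\operatorname{card}(E(G_i)) - \operatorname{card}(V(G_i)) + \psi(G_i) + 1 = i+1$. Since the right-hand side depends only on the graph, this simultaneously yields the explicit formula and the independence from the chosen decomposition. The base case $i=0$ is a direct check: if $F_0$ is an odd/even cycle of length $n_0$, then $n_0 - n_0 + 0 + 1 = 1$, and if $F_0$ is a bridge (an isolated edge), then $1 - 2 + 1 + 1 = 1$.

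For the inductive step, assume the formula at level $i-1$ and compare invariants when $F_i$ is added to $G_{i-1}$. If $F_i$ is a bridge, it contributes one new vertex, one new edge, and one new bridge to the graph, so $\operatorname{card}(E) - \operatorname{card}(V) + \psi$ increases by $1 - 1 + 1 = 1$, matching the increment from $i$ to $i+1$. If $F_i$ is an ear with $k$ edges (open or closed), then it contributes $k-1$ new internal vertices and $k$ new edges. Since the new edges either form a cycle by themselves (closed ear) or form a cycle together with any $a$-to-$b$ path already in $G_{i-1}$ (open ear), none of the $k$ new edges is a bridge of $G_i$; granting that $\psi(G_i) = \psi(G_{i-1})$, the invariant again increases by $k - (k-1) + 0 = 1$, as required.

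The delicate point — the main obstacle of the proof — is justifying that adding an ear does not destroy a bridge of $G_{i-1}$. This must be handled by invoking the structural meaning of a generalized ear decomposition: the bridge steps are precisely the bridges of the final graph $G$, while the ear steps build up individual $2$-edge-connected components (blocks), as guaranteed by Whitney's theorem (Theorem \ref{t-Withney}) applied block by block. Consequently, the endpoints $a,b$ of any ear $F_i$ lie in a common block of $G$, and the cycle produced by $F_i$ lies entirely inside that block; in particular it meets no bridge of $G_{i-1}$, so the bridge set is preserved at every ear step. Combining this with the inductive calculation yields the claimed equality $s+1 = \operatorname{card}(E(G)) - \operatorname{card}(V(G)) + \psi(G) + 1$, and independence of the decomposition is automatic since the right-hand side is an invariant of $G$.
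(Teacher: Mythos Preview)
Your proof is correct and takes a genuinely different route from the paper's. The paper argues by induction on the number of bridges $\psi(G)$: the 2-edge-connected case is taken as known, and for the inductive step one locates a bridge step $F_i$ in the decomposition and splits the sequence there into two shorter generalized ear decompositions $H_1$ and $H_2$ to which the induction hypothesis applies; the formula is then recovered by bookkeeping on $\card(E)$, $\card(V)$, and $\psi$ across the split. Your argument instead runs the induction along the decomposition itself, tracking the quantity $\card(E(G_i))-\card(V(G_i))+\psi(G_i)$ step by step and showing it increments by exactly $1$ at each stage. This is more elementary and avoids the somewhat delicate claim, used implicitly in the paper, that the tail $F_{i+1},\dots,F_s$ after a bridge step is itself a valid generalized ear decomposition of a connected graph. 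The price you pay is your ``delicate point'': you must check that an ear step never kills a bridge of $G_{i-1}$. Your justification for this is sound once one observes, as you do, that the bridge steps are by definition bridges of the final graph $G$; then any ear $F_i$ is a path in $G$ disjoint from every such bridge $e$, so its endpoints $a,b$ lie in the same component of $G-e$ and hence of $G_{i-1}-e$, and $e$ survives as a bridge of $G_i$. (Your appeal to Whitney ``applied block by block'' is really invoking this consequence rather than the existence construction itself, but the logic goes through.)
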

 \begin{proof}  The proof follows by induction on the number of bridges in $G$. Our claim is well known for 2-edge connected graphs. Suppose that $G$ has at least one bridge. Let $F_0, F_1,\ldots , F_s,$ be a generalized ear decomposition of $G$. Note that if $F_i$ is a bridge for some $i<s$ then the graph $G'$ union of $F_{i+1}, F_{i+2},\ldots , F_{s}$ has  $F_{i+1}, F_{i+2},\ldots , F_{s}$ as a generalized ear decomposition. Let $H$ be the graph with the generalized ear decomposition  $F_0, F_1,\ldots , F_{s-1}.$ If $F_s$ is a bridge then $\psi (H)=\psi (G)-1$, $\card(E(H))=\card(E(G))-1,\card(V(H))=\card(V(G))-1,$ by induction hypothesis we have 
$s=\card(E(H))-\card(V(H))+ \psi(H)+1$ hence by a simple calculation we get $s+1=\card(E(G))-\card(V(G))+ \psi(G)+1$. Similar arguments apply when $F_0$ is a bridge.
If $F_0, F_s$ are not  bridges then there exists $0<i<s$ such that $F_i$ is a bridge. Let $H_1$ be the graph with the generalized ear decomposition $F_0, F_1,\ldots , F_{i-1}$ and $H_2$ be the graph with the generalized ear decomposition $F_{i+1}, F_{i+2},\ldots , F_{s}$. We have  
  $\psi (H_1)+\psi (H_2)=\psi (G)-1$, $\card(E(H_1))+\card(E(H_2))=\card(E(G))-1,\card(V(H_1))+\card(V(H_2))=\card(V(G)).$ by induction hypothesis we have 
$\card(E(H_1))-\card(V(H_1))+ \psi(H_1)+1=i, \card(E(H_2))-\card(V(H_2))+ \psi(H_2)+1=s-i$ hence by a simple calculation we get $s+1=\card(E(G))-\card(V(G))+ \psi(G)+1$.
 \end{proof}
\begin{ex} \label{ex.1}
Let $G=(V,E)$ be a graph with the vertex set $V=\{a,b,c,d,e,f,g,h\}$ and $E(G)=\{ab,bc,ca,cd,de,ef,fg,gd,eh\}$. We have a generalized ear decomposition of $G$:
$$F_0=abc, F_1=cd, F_2=defgd, F_3=eh,$$
where $F_0$ is an odd ear, $F_1, F_3$ are bridges,  $F_2$ is an even ear. So we have $\varphi(G)=1, \psi(G)=2$.
\begin{figure}[!ht]
\centering
\includegraphics[height=6cm,width=8cm]{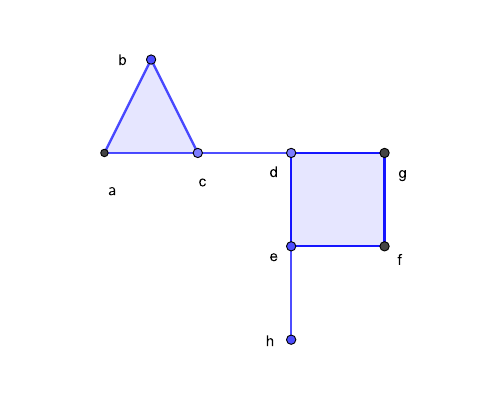}
\caption{$G$ and a generalized ear decomposition  } \label{Fig.1}
\end{figure}
\end{ex}
\begin{defn} The {\it contraction} of an edge $e =uv$ consists of identifying $u$ and $v$ and deleting the edge $e$.
The {\it contraction} of an edge set $F$ of $G$ denoted  $G / F$,  is obtained from $G$ by
contracting each edge of $F$.  For a connected graph $G$  an
edge set $F$ is called {\it critical-making} if the contraction $G/ F$ results in a factor-critical
graph.
\end{defn}

A minimal critical-making set can therefore be used as a measure 
of how close a graph is to factor criticality.

\begin{ex}\label{ex.2}
Let $G=(V,E)$ be a graph with the vertex set $V=\{a,b,c,d,e,f,g,h\}$ and $E=\{ab,bc,ca,cd,de,ef,fg,gd,eh\}$ (see example \ref{ex.1}).
By contracting the edges $F=\{cd,dg,eh\}$  we have a factor-critical graph in figure \ref{Fig.2}.
\begin{figure}[!ht]
\centering
\includegraphics[height=6cm,width=8cm]{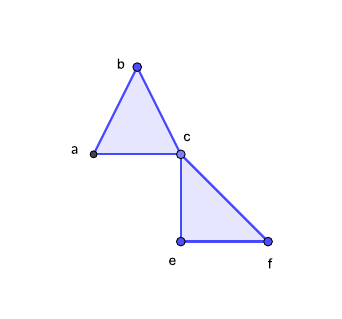}
\caption{$G/F$ contraction of the edges $F=\{cd,dg,eh\}$ } \label{Fig.2}
\end{figure}
\end{ex}

\begin{defn} The {\it subdivision} (or substitution) of an edge   $e=uv$ in a cycle of a graph $G$ means that we add a  new vertex $w$ and replace the edge $e$ by two edges $uw,wv$.The {\it subdivision} of a bridge $e=uv$  of a graph $G$ means that we add  a  new vertex $w$ and two edges $uw,wv$.   The {\it subdivision} of an edge set $F$ of a graph $G$ means that we subdivide each edge $e$ of $F$. The resulting graph is denoted by $G \succ F$.
\end{defn}

Note that our definition of subdivision is a slight different from the usual one.

\begin{ex} \label{ex.3}
Let $G=(V,E)$ be a graph in the example \ref{ex.2}.
By subdividing the edges $F=\{cd,dg,eh\}$ we have a factor-critical graph in figure \ref{Fig.3}.
\begin{figure}[!ht]
\centering
\includegraphics[height=8cm,width=10cm]{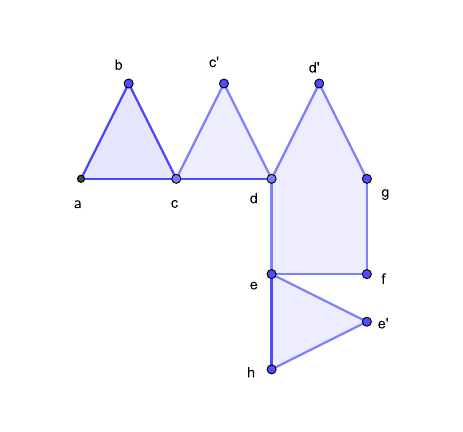}
\caption{$G\succ F$ Subdivision of the edges $F=\{cd,dg,eh\}$   }\label{Fig.3}
\end{figure}
\end{ex}

Szigeti \cite{S} showed that for  2-edge-connected graphs, all the three approaches defined above are actually equivalent.

\begin{thm}(Szigeti \cite{S}) Let  $G$ be a 2-edge connected graph and $k$
be a positive integer. Then the following statements are equivalent:
\begin{enumerate}
\item  $\varphi( G)=k .$
\item $\min \{\card(F)\mid F \text{ is  a  critical-making  set}\} = k.$
\item $\min\{\card(F)\mid  G \succ F\text{ is  a  factor-critical  graph}\} = k.$
\end{enumerate}
\end{thm}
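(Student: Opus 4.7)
The plan is to prove the chain of equalities $\varphi(G)=\alpha=\beta$, where $\alpha$ and $\beta$ denote the minima in statements (2) and (3) respectively. Lovász's theorem---that a graph is factor-critical if and only if it admits an ear decomposition starting from an odd cycle and composed entirely of odd ears---acts as the bridge in both directions. Because $G$ is 2-edge connected we have $\psi(G)=0$, so every generalized ear decomposition of $G$ is an honest ear decomposition (no bridges).

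For $\alpha,\beta\leq\varphi(G)$, start from an optimal generalized ear decomposition $F_0,F_1,\ldots,F_s$ of $G$ having exactly $\varphi(G)$ even pieces (an even starting cycle $F_0$ or an even open/closed ear). In each even piece choose a single edge: contracting it (for the $\alpha$ bound) or subdividing it (for the $\beta$ bound) flips the parity of that piece while keeping it a valid ear or cycle. The modified ear decomposition then consists entirely of odd pieces starting from an odd cycle, so Lovász's theorem forces $G/F$ (resp.\ $G\succ F$) to be factor-critical, whence $\alpha\leq\varphi(G)$ and $\beta\leq\varphi(G)$.

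For the converse $\varphi(G)\leq\beta$, choose $F$ with $\card(F)=\beta$ so that $G\succ F$ is factor-critical, and fix an odd ear decomposition $F_0',F_1',\ldots,F_s'$ of $G\succ F$. Each subdivision vertex $w\in V(G\succ F)\setminus V(G)$ has degree $2$, so it must appear as an interior vertex of a unique ear $F_{j(w)}'$---being an endpoint of any later ear would raise its degree above $2$. Un-subdividing all such vertices simultaneously produces an ear decomposition of $G$ in which the length of $F_j'$ decreases by the number $n_j$ of subdivision vertices it contained, and hence its parity flips precisely when $n_j$ is odd. The number of even pieces in the resulting decomposition is therefore at most $\card\{j:n_j\text{ is odd}\}\leq\sum_j n_j=\card(F)=\beta$, which gives $\varphi(G)\leq\beta$.

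The remaining inequality $\varphi(G)\leq\alpha$ is the main obstacle of the proof, because un-contraction is less canonical than un-subdivision. After observing that one may assume $F$ is a forest (cycles inside $F$ can be removed without changing $G/F$), each connected component of $F$ contracts to a super-vertex of $G/F$, and a fixed odd ear decomposition of $G/F$ must be lifted back to $G$. The technical difficulty is that a single odd ear of $G/F$ may pass through the same super-vertex more than once, so a naive lift produces a walk rather than a path. Following Szigeti, I would route the repeated visits along a depth-first search of each $F$-component, charging every edge of $F$ to exactly one lifted ear; the same parity bookkeeping as in the subdivision case then bounds the number of even pieces in the lift by $\card(F)=\alpha$. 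Combining the four inequalities yields $\varphi(G)=\alpha=\beta$, as required.
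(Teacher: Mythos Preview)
The paper does not prove this theorem at all: it is quoted verbatim from Szigeti's article \cite{S} and used as a black box, so there is no ``paper's own proof'' to compare against. What can be assessed is whether your argument stands on its own.

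Your treatment of the inequalities $\alpha,\beta\le\varphi(G)$ and $\varphi(G)\le\beta$ is essentially correct. The subdivision direction in particular is clean: since $G$ is $2$-edge connected every new vertex $w$ has degree~$2$, hence $w$ cannot be an endpoint of any later ear and must sit in the interior of exactly one $F_j'$; un-subdividing therefore shortens each ear by $n_j$ edges, and the parity count goes through. (Minor point you glossed over: one should note that no two subdivision vertices are adjacent in $G\succ F$, so the starting odd cycle cannot collapse below length~$3$ and every ear stays a genuine path; this is immediate from the definition of subdivision but worth one sentence.)

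The direction $\varphi(G)\le\alpha$, however, is not proved---you yourself flag it as ``the main obstacle'' and then defer to Szigeti. Your description of the difficulty is also slightly off: an ear of $G/F$ is a path, so it does \emph{not} pass through the same super-vertex more than once. The real issue is that consecutive edges $e_i,e_{i+1}$ of an ear in $G/F$ meet at a super-vertex, but their lifts to $G$ may be incident to \emph{different} vertices of the corresponding $F$-tree, so one must splice in a subpath of that tree; moreover, many ears may visit the same super-vertex and the $F$-edges must be distributed among all of them so that each $F$-edge is used exactly once and the resulting lifts remain simple paths. Organising this allocation (and controlling the parity changes it induces) is precisely the substance of Szigeti's argument, and your sketch does not supply it. As written, your proposal establishes $\varphi(G)=\beta$ and $\alpha\le\varphi(G)$, but the equality with $\alpha$ remains a citation rather than a proof.
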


Since every bridge of $G$ is contained in every critical-making set, we have

\begin{thm}Let  $G$ be a connected graph and $k$ be  a positive integer. Then the following statements are equivalent:
\begin{enumerate}
\item  $\varphi( G)+\psi(G)=k .$
\item $\min \{\card(F)\mid F \text{ is  a  critical-making set} \} = k.$
\item $\min\{\card(F)\mid  G \succ F \text{ is a  factor-critical  graph} \} = k.$
\end{enumerate}
\end{thm}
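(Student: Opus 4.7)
My plan is to deduce this from Szigeti's theorem for 2-edge-connected graphs (just stated above) by localizing to the block structure of $G$. The argument rests on two standard facts from matching theory: a factor-critical graph has no bridges, which follows from a short parity argument on the two sides of a bridge; and a connected graph is factor-critical if and only if each of its blocks is factor-critical, which follows from Lov\'asz's ear-decomposition characterization. Taking these for granted, both bounds split cleanly over the bridges of $G$ and its 2-edge-connected components $G_1,\ldots,G_m$.

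For the lower bound (2) $\Rightarrow$ (1), suppose $F$ is critical-making. Since $G/F$ is factor-critical and hence has no bridges, and since contracting an edge $e'\neq e$ cannot destroy the bridge property of $e$, every bridge of $G$ must lie in $F$; this accounts for $\psi(G)$ edges. For each 2-edge-connected component $G_i$, put $F_i=F\cap E(G_i)$. The block of $G/F$ inherited from $G_i$ is exactly $G_i/F_i$, so it is factor-critical, and by Szigeti $\card(F_i)\geq\varphi(G_i)$. Summing and using $\varphi(G)=\sum_i\varphi(G_i)$ gives $\card(F)\geq\psi(G)+\varphi(G)$. The same bookkeeping handles (3) $\Rightarrow$ (1): if a bridge $e$ of $G$ is omitted from $F$, then subdivisions performed elsewhere create no path around $e$, so $e$ remains a bridge in $G\succ F$, contradicting factor-criticality. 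The paper's special rule turns each bridge in $F$ into a triangle block of $G\succ F$, each $G_i\succ F_i$ is a 2-edge-connected block that must be factor-critical, and Szigeti forces $\card(F_i)\geq\varphi(G_i)$.

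For the upper bound (1) $\Rightarrow$ (2) and (1) $\Rightarrow$ (3), I build $F$ explicitly as the union of all bridges of $G$ with Szigeti-optimal sets $F_i\subset E(G_i)$ in each component, so that $\card(F)=\psi(G)+\sum_i\varphi(G_i)=\psi(G)+\varphi(G)=k$. In $G/F$ each bridge contracts to an identified vertex and each $G_i/F_i$ is factor-critical by Szigeti; the result is connected with every block factor-critical, hence factor-critical. In $G\succ F$ each bridge becomes a triangle block (its three edges form an odd cycle), each $G_i\succ F_i$ is factor-critical by Szigeti, and again the whole graph is connected with every block factor-critical.

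The main obstacle will be the block-decomposition step: I will need to verify that contracting all bridges of $G$ yields a graph whose blocks are precisely the $G_i$ joined at identified vertices, and that subdividing a bridge $e=uv$ by the paper's rule inserts a triangle block $uvw$ attached to the rest of the graph only at $u$ and $v$. Once these local structural claims are confirmed, the theorem reduces to Szigeti's theorem applied component by component, plus the bridge count $\psi(G)$.
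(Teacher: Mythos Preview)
Your proof is correct and follows the same reduction as the paper, which in fact gives only a one-line argument: ``Since every bridge of $G$ is contained in every critical-making set, we have'' the theorem. You have supplied exactly the details the paper omits—namely, that the bridges account for $\psi(G)$ and that Szigeti's theorem, applied to each $2$-edge-connected component $G_i$, accounts for $\varphi(G)=\sum_i\varphi(G_i)$—together with the block-wise characterization of factor-criticality to glue everything back. One small wording issue: the image $G_i/F_i$ need not be a single block of $G/F$ (it is $2$-edge-connected but may have cut vertices), so it is in general a union of blocks; your argument still goes through unchanged, since each of those blocks is factor-critical and hence so is $G_i/F_i$.
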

\section{Constructing Factor-critical graphs by replication}
In this section we develop a method to construct factor-critical graphs from a simple connected non bipartite graph in a minimum number of steps by using replication of vertices.

\begin{defn}\label{df-replication} (Schrijver \cite{Sc}) Let $G = (V, E)$ be a graph with  $V=\{P_1,\ldots,P_d\}$ and  ${\bf a }\in \fn^d$  a non-zero vector.  Set
 $A_i=\{P_i=P_i^{(1)},\ldots,P_i^{(a_i)}\}$ for each  $a_i>0$.
The graph  $S:=p_{\bf a }(G)$ with  the vertex set $V(S)=\cup_{ a_i>0 }A_i$ and the edge set  $E(S)=\{P_i^{(l)}P_j^{(m)} \mid P_i\in A_i, P_j\in A_j, P_iP_j \in E\}$  is called {\it the replication (or duplication)} of  $G$ by the vector  ${\bf a }$.     The  {\it support} of $S$ is  the set $\supp (S):=V(S)\cap V=\supp ({\bf a })$. We denote $N_G(S)=N(V(S)\cap V)$.  
For small values of $a_i\leq 3$  sometimes we will write  $P_i,P_i',P_i''$ instead of $P_i^{(1)},P_i^{(2)},P_i^{(3)}$. As usual we set ${\lvert {\bf a } \rvert} =\sum_i a_i.$
\end{defn}

\begin{defn}\label{df-phi} Let $G$ be simple connected  graph. It is well known that $G$ is {\it non bipartite} if and only if it contains an odd cycle. Let $\varphi_{\rm odd} (G)$ denote  the minimum  number of even ears in  generalized ear decompositions of $G$ such that the first ear is an odd cycle.	
\end{defn} 

\medskip
Clearly, a connected graph $G$ is factor-critical if and only if $\varphi_{\rm odd} ( G)=\psi (G)=0$.

\begin{rem} Let $G$ be a 2-edge connected non bipartite  graph. In Section 5 we will prove that $\varphi_{\rm odd} (G)=\varphi(G)$. The proof uses  the works in graph theory of Frank \cite{F} and Szigeti \cite{S}. From now on we write $\varphi(G)$ instead $\varphi_{\rm odd} (G)$.
\end{rem}
In Lemma 4.7 of  \cite{MD} we have   developed a method for extending  factor-critical graphs. We can apply this method to an ear decomposition starting with an odd cycle, since an odd cycle is factor-critical.
\begin{lem}\label{l-makefc_min} Let $G$ be a connected non bipartite  graph. There exists a vector ${\bf a}_G$ with support in $G$ such that 
$p_{{\bf a}_G+ {\bf 1}_G}(G)$ is factor-critical  and  $\lvert{\bf a}_G \rvert = \varphi  (G)+\psi (G).$ Hence
$\nu(p_{{\bf a}_G+ {\bf 1}_G}(G))= \frac{1}{2}(\card(G)+\varphi  (G)+\psi (G)-1).$ 
\end{lem}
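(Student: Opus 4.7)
The plan is to build ${\bf a}_G$ by induction along a generalized ear decomposition $F_0,F_1,\ldots,F_s$ of $G$ whose first term $F_0$ is an odd cycle and which realises the minimum number of even ears. Such a decomposition exists by the results of Section~2 together with the remark $\varphi(G)=\varphi_{\rm odd}(G)$ made just before the lemma, so it contains exactly $\varphi(G)$ even ears and $\psi(G)$ bridges.

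Writing $G_i=F_0\cup\cdots\cup F_i$, I maintain an integer vector ${\bf a}_i$ supported on $V(G_i)$ such that $H_i:=p_{{\bf a}_i+{\bf 1}_{G_i}}(G_i)$ is factor-critical and $|{\bf a}_i|$ equals the number of even ears plus the number of bridges among $F_0,\ldots,F_i$. The base case ${\bf a}_0=\mathbf{0}$ is immediate because $F_0$ is an odd cycle, hence factor-critical. For the inductive step there are two cases. If $F_i$ is an odd (open or closed) ear, I leave ${\bf a}_{i-1}$ unchanged and give each new internal vertex of $F_i$ multiplicity one; then $H_i$ is obtained from the factor-critical graph $H_{i-1}$ by attaching a single odd ear, so Lov\'{a}sz's ear theorem keeps $H_i$ factor-critical. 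If $F_i$ is an even ear or a bridge, attaching it with all multiplicities one adds an odd number of new vertices to the odd-order graph $H_{i-1}$, and the result has even order, so it cannot be factor-critical. The one extra duplication that restores factor-criticality is produced by Lemma~4.7 of \cite{MD}, which, given a factor-critical graph and an even ear or bridge to be attached, selects a vertex of $F_i$ (or, in the bridge case, its endpoint lying in $G_{i-1}$) whose duplication in the replicated extension yields a factor-critical graph. Incrementing the corresponding coordinate of ${\bf a}_{i-1}$ produces ${\bf a}_i$ with $|{\bf a}_i|=|{\bf a}_{i-1}|+1$.

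Setting ${\bf a}_G:={\bf a}_s$, the running count gives $|{\bf a}_G|=\varphi(G)+\psi(G)$ and $H_s=p_{{\bf a}_G+{\bf 1}_G}(G)$ is factor-critical on $\card(V(G))+|{\bf a}_G|$ vertices. Since a factor-critical graph on $n$ vertices has matching number $(n-1)/2$, the claimed identity $\nu(p_{{\bf a}_G+{\bf 1}_G}(G))=\tfrac12(\card(G)+\varphi(G)+\psi(G)-1)$ follows at once; the implicit parity assertion that the vertex count is odd is a routine check since $F_0$ contributes an odd number of new vertices, each further odd ear an even number, and each even ear or bridge an odd number. The whole argument therefore hinges on the even-ear/bridge step---choosing the correct vertex to duplicate and verifying that the enlarged replication still admits an odd ear decomposition rooted at a vertex, so that Lov\'{a}sz's criterion applies---which is where the main obstacle lies, but it is precisely the content of Lemma~4.7 of \cite{MD}; the remainder of the proof is bookkeeping.
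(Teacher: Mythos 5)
Your proposal is correct and follows essentially the same line as the paper's proof: induct along an optimal generalized ear decomposition whose first ear is an odd cycle, adding one duplication per even ear and per bridge so that the modified ears become odd, with factor-criticality guaranteed by Lov\'{a}sz's ear criterion (alias Lemma~4.7 of \cite{MD}). The paper just makes explicit which vertex to duplicate (the endpoint $P_{i,1}\in G_{i-1}$ of the offending ear, rerouted through a neighbour $Q_{i,1}$) and exhibits the resulting odd ear decomposition directly, whereas you defer that choice to the cited lemma; the bookkeeping identities $\lvert{\bf a}_G\rvert=\varphi(G)+\psi(G)$ and $\nu=\tfrac12(\card(G)+\varphi(G)+\psi(G)-1)$ are the same in both.
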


 \begin{proof} 
 Since $G$ is  connected non bipartite  graph,  there exists a generalized ear decomposition of $G $:  $F_0, F_1,\ldots , F_s$, such that $F_0$ is an odd cycle with $\varphi  (G)$ even ears. We set   $G_{i-1}$ be the union of $F_0,\ldots ,F_{i-1}$.\\
For each even ear  $F_i : P_{i,1}, \ldots , P_{i,j_i}$  we duplicate the vertex $P_{i,1}$,  let  $Q_{i,1}\in G_{i-1}$ a neighbor of $P_{i,1}$ and let $F'_i$ be the path $F'_i : Q_{i,1},P'_{i,1},P_{i,2}, \ldots , P_{i,j_i}$.\\ 
For each bridge   $F_i : P_{i,1}, P_{i,2} $, with $P_{i,1}\in F_{i-1}$, we duplicate the vertex $P_{i,1}$. Let $Q_{i,1}\in G_{i-1}$ a neighbor of $P_{i,1}$ and let $F'_i$ be the path $F'_i : Q_{i,1},P'_{i,1},P_{i,2},P_{i,1}$.
If $F_i$ is an odd ear we set $F'_i=F_i$. For each vertex $P$ in $G$, let $a_P$ be the number of times minus one that $P$ is duplicated by the above construction, and $a_P=0$ if it no duplicated. Let ${\bf a}_G$ be the vector with support in $G$ with coordinates $a_P, P\in G$. Then $p_{{\bf a}_G+ {\bf 1}_G}(G)$ has an odd ear decomposition
$F'_0, F'_1,\ldots , F'_t$. In particular $p_{{\bf a}_G+ {\bf 1}_G}(G)$ is factor-critical. Moreover  
$$\lvert{\bf a}_G\rvert+ \card(G)=\card(p_{{\bf a}_G+ {\bf 1}_G}(G))= \card(G)+\varphi  (G)+\psi (G),$$ so that $\lvert{\bf a}_G\rvert=\varphi  (G)+\psi (G)$. Hence
  $$\nu(p_{{\bf a}_G+ {\bf 1}_G}(G))= \frac{1}{2}(\card(G)+\varphi  (G)+\psi (G)-1).$$
\end{proof}

\begin{ex}
Let $G=(V,E)$ be a graph with the vertex set $V=\{a,b,c,d,e,f,g,h\}$ as in the figure \ref{Fig.1}. Then we have generalized ear decomposition of $G$: 
$$F_0=abc, F_1=cd, F_2=defgd, F_3=eh,$$
where $F_0$ is odd ear, $F_1, F_3$ are bridges,  and $F_2$ is even ear. Therefore, we have that $ \varphi(G)=1$ and $\psi(G)=2$. Choose the vector ${\bf a}_G=(0,0,1,1,1,0,0,0)\in \mathbb N^8$ and put 
$S=p_{{\bf a}_G+{\bf 1}_G}(G)$. Then we have $V(S)=D(S)=\{a,b,c,c',d',e',f,g,h\}$ and $S$ has an ear decomposition
$$F'_0=abc, F'_1=bc'dc,F'_2=c'd'gd, F'_3=defg, ehe'f,$$
where they are all odd ears. It is clear that 
$$\nu(S)= \frac{1}{2}(\card(G)+\varphi  (G)+\psi (G)-1)= \frac{1}{2}(8+1+2-1)=5.$$ 
\begin{figure}[!ht]
\centering
\includegraphics[height=8cm,width=10cm]{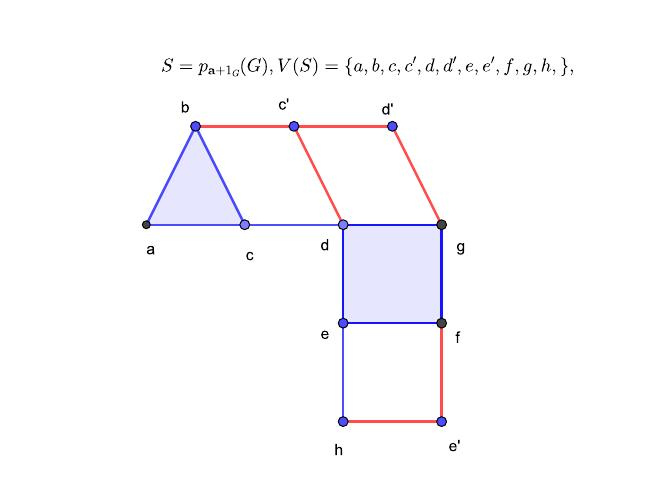}
\caption{$G$ and $p_{{\bf a}+ {\bf 1}_G}(G)$ } \label{Fig.4}
\end{figure}
\end{ex}

\begin{lem}\label{l-makefc}  Let $G$ be a connected non bipartite  graph. If for some vector $\bf a$,  $p_{{\bf a}+ {\bf 1}_G}(G)$ is  factor-critical then 
 $\lvert{\bf a}\rvert\geq \varphi  (G)+\psi (G).$ As a consequence $$\nu(p_{{\bf a}+ {\bf 1}_G}(G))\geq  \frac{1}{2}(\card(G)+\varphi  (G)+\psi (G)-1).$$
 \end{lem}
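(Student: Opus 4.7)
\emph{Strategy.} My plan is to reduce the inequality $\lvert{\bf a}\rvert \geq \varphi(G) + \psi(G)$ to the subdivision characterization in Theorem~2.10: the quantity $\varphi(G)+\psi(G)$ equals the minimum size of an edge set $F \subset E(G)$ such that the subdivision $G\succ F$ is factor-critical. It therefore suffices, assuming that $S := p_{{\bf a}+{\bf 1}_G}(G)$ is factor-critical, to construct such an edge set $F$ of size at most $\lvert{\bf a}\rvert$.

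\emph{Building $F$.} By Lov\'asz's theorem and the strengthening proven in Section~5, $S$ admits an odd ear decomposition $F'_0, F'_1, \ldots, F'_t$ whose initial ear $F'_0$ is an odd cycle. Let $\pi \colon V(S) \to V(G)$ denote the canonical projection $P_i^{(l)} \mapsto P_i$. Each extra copy $P_i^{(l)}$ (with $l \geq 2$) is introduced in exactly one ear $F'_k$, either as an internal vertex when $k \geq 1$ or as a vertex of the initial odd cycle when $k=0$. In either situation $P_i^{(l)}$ has two neighbours $a, b$ along that ear, and I select one of the two edges $P_i \pi(a)$ or $P_i \pi(b)$ of $G$ and put it into $F$. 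Since we add at most one edge per extra copy, $\lvert F\rvert \leq \lvert{\bf a}\rvert$.

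\emph{Lifting to $G \succ F$.} To verify that $G \succ F$ is factor-critical, I lift the odd ear decomposition of $S$ to an odd ear decomposition of $G \succ F$: each time an ear of $S$ passes through a duplicate $P_i^{(l)}$ whose assigned edge is $P_iP_j \in F$, the local segment through $P_i^{(l)}$ is replaced by the segment through the new subdivision vertex $w_{ij}$ of the subdivided edge $P_iP_j$; the remaining copies of $P_i$ are folded back onto $P_i$. Because both duplication and subdivision add exactly one vertex and one edge along an ear, the length of every ear is preserved, each ear remains odd, and $F'_0$ remains an odd cycle. Lov\'asz's theorem then yields that $G \succ F$ is factor-critical, so by Theorem~2.10 we get $\lvert{\bf a}\rvert \geq \lvert F\rvert \geq \varphi(G) + \psi(G)$. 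The inequality for $\nu$ is then immediate: since $p_{{\bf a}+{\bf 1}_G}(G)$ is factor-critical of order $\card(G) + \lvert{\bf a}\rvert$, one has $\nu(p_{{\bf a}+{\bf 1}_G}(G)) = \tfrac{1}{2}(\card(G) + \lvert{\bf a}\rvert - 1) \geq \tfrac{1}{2}(\card(G) + \varphi(G) + \psi(G) - 1)$.

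\emph{Main obstacle.} The delicate step is the lift: one has to check that the modified ears remain simple paths or cycles and that each one still attaches to the previously constructed graph at the right endpoints, especially when a single ear of $S$ passes through several duplicates of the same vertex or through several copies $P_i^{(l)}$ that share the vertex $P_i$ but were assigned different edges. The choices involved in the construction of $F$ must be compatible with the folding, which I would arrange by an induction on the ear index $k$: for each new duplicate encountered, choose its assigned edge among the two available projections so that it differs from any edge already selected for the same vertex $P_i$ in earlier ears. This is always possible because the two neighbours $a, b$ of $P_i^{(l)}$ along the ear project to two distinct edges of $G$ incident to $P_i$.
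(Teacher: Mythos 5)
The high-level strategy of reducing to the subdivision characterization in Theorem 2.10 is plausible, but the \emph{Lifting} step has a genuine gap that cannot be repaired as described. If $P_i^{(l)}$ is an internal vertex of an ear in $S$ with ear-neighbours $a$ and $b$, then $\pi(a)$ and $\pi(b)$ both lie in $N_G(P_i)$, so in particular neither equals $P_i$. After folding $P_i^{(l)}$ to the subdivision vertex $w_{ij}$ (where, say, $P_j=\pi(a)$), the proposed replacement segment is $\pi(a)-w_{ij}-\pi(b)$; but $w_{ij}$ has degree $2$ in $G\succ F$ and is adjacent only to $P_i$ and $P_j$, so $w_{ij}\pi(b)$ is never an edge. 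Concretely, take $G$ the $4$-cycle $1,2,3,4$ with chord $13$ and ${\bf a}={\bf 1}_{\{1\}}$; then $S$ is factor-critical with odd ear decomposition $F'_0=123$, $F'_1=2\,1'\,4\,3$, $F'_2=1'3$, $F'_3=14$, and the duplicate $1'$ has ear-neighbours $2,4$, forcing $F\subset\{12,14\}$, but neither subdivision vertex is adjacent to both $2$ and $4$, so the folded $F'_1$ is not a path. There is also an obstruction to any one-to-one lift: $\card(E(S))=\sum_{P_iP_j\in E(G)}({\bf a}_i+1)({\bf a}_j+1)$ generally exceeds $\card(E(G\succ F))=\card(E(G))+\card(F)+\card\{\text{bridges in }F\}$ even when $\card(F)=\lvert{\bf a}\rvert$, so by Lemma \ref{number-ears} their odd ear decompositions have different numbers of ears (in the example above, $4$ versus $2$), and a bijective correspondence between ears cannot exist.

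The paper's proof takes a genuinely different route that sidesteps subdivision entirely: it stays inside the replication picture and performs an induction which identifies duplicated vertices one at a time, producing a descending chain of vectors ${\bf a}={\bf a}_0, {\bf a}_1,\ldots,{\bf a}_{\lvert{\bf a}\rvert}=0$ together with generalized ear decompositions of $p_{{\bf a}_n+{\bf 1}_G}(G)$ satisfying the invariant $n\geq(\varphi+\psi)(p_{{\bf a}_n+{\bf 1}_G}(G))$. Each identification splits or shortens a single ear, and a case analysis shows $\varphi+\psi$ increases by at most $1$ per step, yielding $\lvert{\bf a}\rvert\geq(\varphi+\psi)(G)$ when $n=\lvert{\bf a}\rvert$. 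If you want to salvage your route through Theorem 2.10, you would need a construction of $F$ that does not attempt to transport the ear decomposition edge-by-edge; as written, the lift fails.
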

 
 \begin{figure}[!ht]
\centering
\includegraphics[height=13cm,width=11cm]{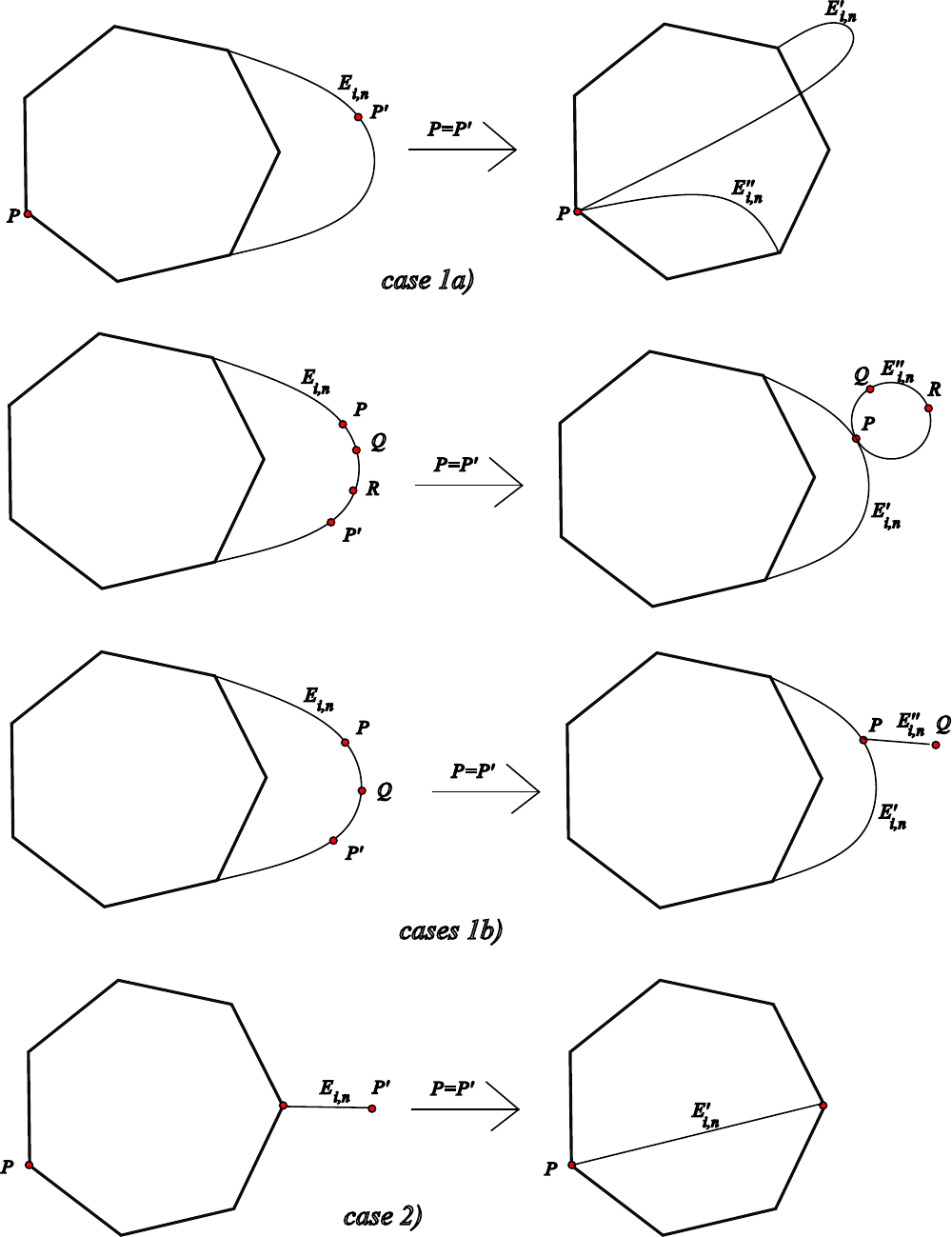}
\caption{Identification of a duplicated vertex } \label{Fig.5}
\end{figure}

\begin{proof}
Let $p_{{\bf a}+ {\bf 1}_G}(G)$ be  factor-critical for some vector $\bf a$. For all $n=0,\ldots,\lvert{\bf a}\rvert,$ we will define by induction a vector 
${\bf a}_n$ such that $\supp({\bf a}_{n})\subset \supp({\bf a})$, $\lvert{\bf a}_{n}\rvert=\lvert{\bf a}\rvert-n$, $p_{{\bf a}_n+ {\bf 1}_G}(G)$ has a generalized ear decomposition and $n\geq (\varphi  +\psi )(p_{{\bf a}_n+ {\bf 1}_G}(G))$. \\ 
We set ${\bf a}_{0}={\bf a}$, since $p_{{\bf a}+ {\bf 1}_G}(G)$ is  factor-critical we have  $F\colon F_0, F_1,\ldots , F_s$ an odd ear decomposition and 
$0= (\varphi  +\psi )(p_{{\bf a}_0+ {\bf 1}_G}(G))$. \\
Let $n\geq 0$ and suppose that we have a vector 
${\bf a}_n$ such that $\supp({\bf a}_{n})\subset \supp({\bf a})$, $\lvert{\bf a}_{n}\rvert=\lvert{\bf a}\rvert-n$, $p_{{\bf a}_n+ {\bf 1}_G}(G)$ has a generalized ear decomposition  $E_{0,n},\ldots, E_{t_n,n}$ and $n\geq (\varphi  +\psi )(p_{{\bf a}_n+ {\bf 1}_G}(G))$. If $n=\lvert{\bf a}\rvert$ our claim is done. So suppose $n<\lvert{\bf a}\rvert$.  Note that in a generalized ear decomposition we can always have $V(E_{0,n})\subset V$. Let     $E_{i,n} \colon  P_{i,1}, \ldots , P_{i,j_i}$ be  a generalized ear  such that all vertices in 
 $E_{0,n}\cup E_{1,n}\cup\dots \cup E_ {i-1,n}$ belong to $V$ but there exists $P'\in V(E_{i,n})\setminus V$ which is the duplicated vertex of $P\in E_{0,n}\cup E_{1,n}\cup\dots \cup E_ {i,n}$. By identification of $P$ and $P'$ we get the vector ${\bf a}_{n+1}={\bf a}_{n}-{\bf 1}_{\{P\}}$. Ears or bridges $E_ {j,n}$ for $j>i$ beginning in $P'$ will begin in $P$. The ear or bridge $E_ {i,n}$ will be modified,  we have several cases
   
\begin{enumerate}
\item $E_{i,n}$ is an ear with several edges.
\begin{enumerate}
	 
\item If  $P'=P_{i,l}$, for some $l\geq 2$  is some internal vertex of $E_{i,n}$ and is the duplication of $P\in E_{0,n}\cup E_{1,n}\cup\dots \cup E_ {i-1,n}$ then let 
$E'_{i,n}\colon P_{i,1}, \ldots,P,$ and $E''_{i,n}\colon  P,P_{i,l+1},\ldots, P_{i,j_i}$.  We replace the ear $E_{i,n}$ by the two ears $E'_{i,n},E''_{i,n}$, ears or bridges ending in $P'$ will end in $P$, so that we get an ear decomposition of $p_{{\bf a}_{n+1}+ {\bf 1}_G}(G)$. Note that the number of edges in $E'_{i,n},E''_{i,n}$ equals the number of edges in $E_{i,n}$. If $E_{i,n}$ is an odd ear then one of $E'_{i,n},E''_{i,n}$ is an even ear and the other is odd so that 
 $n+1\geq (\varphi  +\psi )(p_{{\bf a}_{n+1}+ {\bf 1}_G}(G))=(\varphi  +\psi )(p_{{\bf a}_{n}+ {\bf 1}_G}(G))+1$.
  If $E_{i,n}$ is an even ear then both  $E'_{i,n},E''_{i,n}$ are   even or odd  ears. If both ears are odd then 
 $(\varphi  +\psi )(p_{{\bf a}_{n+1}+ {\bf 1}_G}(G))=(\varphi  +\psi )(p_{{\bf a}_{n}+ {\bf 1}_G}(G)-1$. 
If both ears are even then  $(\varphi  +\psi )(p_{{\bf a}_{n+1}+ {\bf 1}_G}(G))=(\varphi  +\psi )(p_{{\bf a}_{n}+ {\bf 1}_G}(G))+1$. In both cases we have 
 $n+1\geq (\varphi  +\psi )(p_{{\bf a}_{n+1}+ {\bf 1}_G}(G))$.
\item  If  $P'=P_{i,m}$, for some $m\geq 2$  is some internal vertex of $E_{i,n}$ and  $P=P_{i,l}\in E_{i,n}$ then  we can suppose $l<m$. We have the following cases:\\
  If $m=l+2$ then  we get the odd path $E'_{i,n}\colon P_{i,1}, \ldots ,P_{i,l},P_{i,l+3},\ldots , P_{i,j_i}$  and the edge $E''_{i,n}\colon P_{i,l} P_{i,l+1}$. As above we replace the ear $E_{i,n}$ by the two ears $E'_{i,n},E''_{i,n}$, ears or bridges ending in $P'$ will end in $P$,  so that we get a ear decomposition of $p_{{\bf a}_{n+1}+ {\bf 1}_G}(G)$ and we still have $n+1\geq (\varphi  +\psi )(p_{{\bf a}_{n+1}+ {\bf 1}_G}(G))$. \\
If $m>l+2$   then  we have the   path $E'_{i,n}\colon  P_{i,1}, \ldots ,P_{i,l},P_{i,m+1},\ldots , P_{i,j_i}$ and  $E''_{i,n}\colon  P_{i,l}, \ldots ,P_{i,m-1},P_{i,l} $. As above we replace the ear $E_{i,n}$ by the two ears $E'_{i,n},E''_{i,n}$,   ears or bridges ending in $P'$ will end in $P$, other ears are unchanged,  so that we get a generalized ear decomposition of $p_{{\bf a}_{n+1}+ {\bf 1}_G}(G)$ and we still have $n+1\geq (\varphi  +\psi )(p_{{\bf a}_{n+1}+ {\bf 1}_G}(G))$. 
\end{enumerate}

\item $E_{i,n}\colon P_{i,1}, P_{i,2}$ is a bridge with $P_{i,2}\notin E_{0,n}\cup E_{1,n}\cup\dots \cup E_ {i-1,n}$. So that $P'=P_{i,2}$ and  $P\in  E_{0,n}\cup E_{1,n}\cup\dots \cup E_ {i-1,n}$. We have that either $P_{i,1}, P$ is an edge in $ E_{0,n}\cup E_{1,n}\cup\dots \cup E_ {i-1,n}$ or a chord  $E'_{i,n}\colon P_{i,1}, P$. In this last case  we replace the ear $E_{i,n}$ by   $E'_{i,n}$. In both cases  we get a generalized ear decomposition of $p_{{\bf a}_{n+1}+ {\bf 1}_G}(G)$ and we have $n+1\geq (\varphi  +\psi )(p_{{\bf a}_{n+1}+ {\bf 1}_G}(G))$. 
 \end{enumerate}
  \end{proof}

\begin{defn}\label{def-makefc}
 Let  $G=G_1\cup \ldots\cup G_t$ be a simple graph, which is a disjoint union of  $t$ connected non bipartite graphs $G_i, i=1,\ldots,t$.  We set $$\nu ^*(G)= \frac{1}{2}(\card(G)+\varphi  (G)+\psi (G)-t).$$
 \end{defn}

 \begin{thm}\label{th-makefc} Let  $G=G_1\cup \ldots\cup G_t$ be a simple graph, which is a disjoint union of  $t$ connected non bipartite graphs $G_i, i=1,\ldots,t$. Then 
$$\min \{\nu(p_{{\bf a}+ {\bf 1}_G}(G) )\mid p_{{\bf a}+ {\bf 1}_G}(G) {\rm \ is\ matching-critical}\} =\nu ^*(G).$$
Let $k\in \fn$. The following statements are equivalent:
\begin{enumerate}
\item  $\varphi ( G)+\psi(G)=k $ 
\item $\min \{\lvert{\bf a}\rvert\ \ \mid p_{{\bf a}+ {\bf 1}_G}(G) {\rm \ is\ matching-critical}\} = k$
\end{enumerate}
\end{thm}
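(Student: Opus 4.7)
The plan is to reduce both assertions to the connected non-bipartite case already handled by Lemmas~\ref{l-makefc_min} and~\ref{l-makefc}. Since $G=G_1\cup\cdots\cup G_t$ is a disjoint union of connected components, every replication vector ${\bf a}$ with support contained in $V(G)$ decomposes uniquely as ${\bf a}={\bf a}^{(1)}+\cdots+{\bf a}^{(t)}$ with $\supp({\bf a}^{(i)})\subset V(G_i)$. No edge of $G$ joins distinct components, so none of the edges of $p_{{\bf a}+{\bf 1}_G}(G)$ does either; therefore
$$p_{{\bf a}+{\bf 1}_G}(G)=\bigsqcup_{i=1}^{t}p_{{\bf a}^{(i)}+{\bf 1}_{G_i}}(G_i),$$
and the four quantities $\card$, $\varphi$, $\psi$ and $|\cdot|$ are all additive over this decomposition.

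Next I would invoke the classical fact, a consequence of the Gallai--Edmonds structure theorem, that a graph is matching-critical if and only if each of its connected components is factor-critical. Thus $p_{{\bf a}+{\bf 1}_G}(G)$ is matching-critical exactly when every $p_{{\bf a}^{(i)}+{\bf 1}_{G_i}}(G_i)$ is factor-critical, and under this condition its matching number equals
$$\sum_{i=1}^{t}\nu\bigl(p_{{\bf a}^{(i)}+{\bf 1}_{G_i}}(G_i)\bigr)=\sum_{i=1}^{t}\tfrac{1}{2}\bigl(\card(G_i)+|{\bf a}^{(i)}|-1\bigr)=\tfrac{1}{2}\bigl(\card(G)+|{\bf a}|-t\bigr).$$

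For the upper bounds, I would apply Lemma~\ref{l-makefc_min} to each component $G_i$ to produce ${\bf a}^{(i)}={\bf a}_{G_i}$ with $|{\bf a}^{(i)}|=\varphi(G_i)+\psi(G_i)$ making $p_{{\bf a}^{(i)}+{\bf 1}_{G_i}}(G_i)$ factor-critical. Gluing these pieces together yields an admissible vector ${\bf a}$ of total size $\varphi(G)+\psi(G)$ and matching number equal to $\nu^*(G)$. For the lower bounds I would apply Lemma~\ref{l-makefc} componentwise: matching-criticality forces factor-criticality of each $p_{{\bf a}^{(i)}+{\bf 1}_{G_i}}(G_i)$, which in turn gives $|{\bf a}^{(i)}|\geq\varphi(G_i)+\psi(G_i)$; summing yields $|{\bf a}|\geq\varphi(G)+\psi(G)$ and $\nu(p_{{\bf a}+{\bf 1}_G}(G))\geq\nu^*(G)$. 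Equality of both minima follows, and the equivalence (1)$\Leftrightarrow$(2) is immediate. The main obstacle is really the componentwise characterization of matching-critical graphs; once this is in place, the remainder is a bookkeeping exercise using the additivity of $\card$, $\varphi$ and $\psi$ across the disjoint union.
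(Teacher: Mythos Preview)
Your proposal is correct and follows essentially the same strategy as the paper: both reduce to the connected case by decomposing ${\bf a}$ componentwise, invoke Lemma~\ref{l-makefc_min} on each $G_i$ to realise the upper bound, and invoke Lemma~\ref{l-makefc} on each component to obtain the lower bound. Your write-up is in fact more explicit than the paper's on two points---the decomposition $p_{{\bf a}+{\bf 1}_G}(G)=\bigsqcup_i p_{{\bf a}^{(i)}+{\bf 1}_{G_i}}(G_i)$ and the componentwise use of Lemma~\ref{l-makefc} for the lower bound---where the paper simply asserts that the minimum is ``clear''.
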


\begin{proof}
Let  $G$ be a connected non bipartite graph, by lemma \ref{l-makefc} we have  
$$\min \{\nu(p_{{\bf a}+ {\bf 1}_G}(G) )\mid p_{{\bf a}+ {\bf 1}_G}(G) {\rm \ is\ factor\ critical}\} = \frac{1}{2}(\card(G)+\varphi  (G)+\psi (G)-1).$$ 
Let  $G=G_1\cup \ldots\cup G_t$ be a simple graph, which is a disjoint union of  $t$ connected non bipartite graphs $G_i, i=1,\ldots,t$. For $i=1,\ldots,t$, by lemma \ref{l-makefc} we have  
$$\min \{\nu(p_{{\bf a}+ {\bf 1}_{G_i}}(G_i) ) \mid p_{{\bf a}+ {\bf 1}_{G_i}}(G_i) {\rm \ is\ factor\ critical}\} =\nu ^*(G_i).$$ 
By  lemma \ref{l-makefc_min} let ${\bf a}_{G_i}$ be a vector such that $ p_{{\bf a}_{G_i}+ {\bf 1}_{G_i}}(G_i)$ is  factor-critical and $\nu(p_{{\bf a}_{G_i}+ {\bf 1}_{G_i}}(G_i))=\nu ^*(G_i)$. Let ${\bf a}_G=\sum_{i=1}^ {t}  {\bf a}_{G_i}$ then  $p_{{\bf a}_G+ {\bf 1}_G}(G)$ is matching-critical,  $\nu(p_{{\bf a}_G+ {\bf 1}_G}(G))=\sum_{i=1}^ {t} \nu ^*(G_i)$. It is clear that 
$$\nu(p_{{\bf a}_G+ {\bf 1}_G}(G))=\min \{\nu(p_{{\bf a}+ {\bf 1}_G}(G) )\mid p_{{\bf a}+ {\bf 1}_G}(G) {\rm \ is\ matching-critical}\},$$
and 
$$\sum_{i=1}^ {t} \nu ^*(G_i)= \frac{1}{2}(\card(G)+\varphi  (G)+\psi (G)-t).$$ 
For a connected non bipartite graph, the proof of the second assertion follows immediately from lemma \ref{l-makefc}. If the graph is disconnected then the proof is similar to the proof of the first assertion.
\end{proof}
\section{Stable associated primes, dstab and astab}

It is known from  \cite{B}  that $\Ass (I^{k}_G)=\Ass (I^{k+1}_G)$ for $k$ large enough and $\Ass (I^{k}_G)$ is called {\it the set of stable associated primes} of $I_G$.  Furthermore if $G$ is a non bipartite graph, then 
$\m ^{{\bf 1}_{V}}\in \Ass (I^{k}_G)$ for $k$ large enough. The {\it persistence property} $\Ass (I^{k}_G)\subset \Ass (I^{k+1}_G)$ for $k\geq 1$ is proved in  \cite{MMV}. Moreover, we have  the {\it strong persistence property} proved by \cite[Theorem 5.14]{MD} that
 every irreducible component of $I^{k}_G$ induces several irreducible components of $I^{k+1}_G$ with the same associated prime. 
 In this section we will describe the sets $\Ass (I^{k}_G) $  for any $k\geq 1$ in terms of some subsets of $V.$ We denote by $\Irr(I_G^k)$ the set of irreducible components of $I_G^k$. Note that an irreducible component of $I_G^k$, whose radical is  $\m^{{\bf 1 }_U}$, can be written as 
 $\m^{{\bf a }+{\bf 1 }_U},$ for some vector ${\bf a }$ with $\supp({\bf a })\subset U$.
 
\begin{defn} Let $G=(V,E)$ be a non bipartite graph.
$$ \dstab(I_G)=\min \{k\mid  \m ^{{\bf 1}_{V}}\in \Ass (I^{k}_G)\}.$$ 
$$ \astab(I_G)=\min  \{k\mid  \Ass (I^{k}_G)=\Ass (I^{k+i}_G), i\geq 1\}.$$ 
\end{defn}

\medskip
Let $U\subset V$ such that $\m^{ {\bf 1 }_U}\in\Ass (I^{k}_G)$ for some $k\geq 1$. It is well known that $\m^{ {\bf 1 }_U}\in\Ass (I_G)$  if and only if $ V\setminus U$ is a maximal coclique set. From now on we are interested in embedded associated primes, so we can  suppose $k\geq 2$. The description of all the embedded irreducible components of $I_G^k$ is given in the following theorems from \cite{MD}.
\begin{thm}\label{treduction1} Let $\m^{{\bf a }+{\bf 1 }_U}$ be an embedded irreducible component of $I^k_G$   
and  $Z=V\setminus U$. Assume that $\supp({\bf a })\cap N(Z)\not=\emptyset.$ Let ${\bf a }={\bf b }+{\bf c } $ with ${\bf b }, {\bf c }\in \fn^d $ 
 the unique decomposition such that $\supp({\bf b })= \supp({\bf a })\setminus N(Z), \supp({\bf c })= \supp({\bf a })\cap  N(Z)$ 
 and  $\delta =\lvert {\bf c } \rvert.$ 
Then $ \supp({\bf b })\cap N(Z)=\emptyset $ and  $\m^{{\bf b }+{\bf 1 }_U}$ is an embedded irreducible component of $I_G ^{k-\delta }$.
\end{thm}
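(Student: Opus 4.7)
The first assertion $\supp({\bf b})\cap N(Z)=\emptyset$ is immediate from the disjoint-support decomposition ${\bf a}={\bf b}+{\bf c}$. For the main statement I would induct on $\delta=|{\bf c}|$ and reduce to the single-step claim: if $\m^{{\bf a}+{\bf 1}_U}$ is an embedded irreducible component of $I_G^k$ and $j_0\in\supp({\bf a})\cap N(Z)$, then $\m^{({\bf a}-e_{j_0})+{\bf 1}_U}$ is an embedded irreducible component of $I_G^{k-1}$. Iterating over $\supp({\bf c})$ peels off the $N(Z)$-coordinates of ${\bf a}$ one at a time and yields the result; embeddedness persists downward because the radical $\m^{{\bf 1}_U}$ is unchanged and $\Min(I_G^j)$ is independent of $j$.

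For the single-step claim I would use the socle-monomial characterization of irreducible primary components: $\m^{{\bf a}+{\bf 1}_U}$ is such a component of $I_G^k$ iff $\mu^{*}:=\prod_{i\in U}x_i^{a_i}$ satisfies $(I_G^k:\mu^{*})=\m^{{\bf 1}_U}$, i.e.\ \emph{(i)} $\mu^{*}x^{\beta}\notin I_G^k$ for every $\beta$ supported in $Z$, and \emph{(ii)} $x_i\mu^{*}\in I_G^k$ for every $i\in U$. Set $\mu^{**}:=\mu^{*}/x_{j_0}$, well-defined since $a_{j_0}\geq 1$. Condition (i$'$) for $\mu^{**}$ in $I_G^{k-1}$ is immediate by contradiction: a putative witness $\mu^{**}x^{\beta}\in I_G^{k-1}$ with $\supp(\beta)\subset Z$, multiplied by an edge monomial $x_{j_0}x_z$ (where $z\in Z$ is any neighbor of~$j_0$, provided by $j_0\in N(Z)$), would give $\mu^{*}x^{\beta+e_z}\in I_G^k$, contradicting (i).

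For (ii$'$), condition (ii) provides a $k$-fold product $p_i$ of edge monomials dividing $x_i\mu^{*}$; since $x_i\mu^{*}$ has no $Z$-support, $p_i$ is supported in $G_U$. If $p_i$ uses an edge $\{j_0,w\}$, then $p_i/(x_{j_0}x_w)\in I_G^{k-1}$ divides $x_i\mu^{**}$; if $p_i$ uses no $j_0$-edge, then $p_i\in I_G^{k}\subset I_G^{k-1}$ already divides $x_i\mu^{**}$. The most delicate case is $i=j_0$, where the target is $\mu^{*}\in I_G^{k-1}$: one must argue that $p_{j_0}$ \emph{must} use a $j_0$-edge, since otherwise $p_{j_0}\mid\mu^{*}$ would contradict $\mu^{*}\notin I_G^k$ from~(i); removing that edge gives the required witness. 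I expect the main obstacle to be this careful multi-degree bookkeeping in (ii$'$), particularly verifying that the $(k-1)$-fold product obtained after removing the $j_0$-edge still divides the intended target (using $a_w\geq 1$ whenever $\{j_0,w\}$ appears in $p_i$, which follows from $p_i\mid x_i\mu^{*}$); this is where the minimality built into the socle characterization is essential.
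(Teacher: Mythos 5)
The paper itself does not prove Theorem~\ref{treduction1}; it is imported verbatim from \cite{MD}, so there is no in-text argument to compare against. Judged on its own, your strategy (induct on $\delta$, peeling off one unit from a coordinate $j_0\in\supp({\bf a})\cap N(Z)$ at a time, and controlling the witness monomial via a colon-ideal/socle characterization) is structurally sensible and in the spirit of the authors' machinery. The verification of (i$'$) by multiplying a putative witness by the edge monomial $x_{j_0}x_z$ with $z\in Z\cap N(j_0)$ is exactly the right move, and the case analysis for (ii$'$) including the observation that $p_{j_0}$ must use a $j_0$-edge, lest $p_{j_0}\mid\mu^*$ contradict (i), is correct.

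The gap is in the characterization you invoke. For a monomial ideal $I$ and $U\subset V$ with $Z=V\setminus U$, the statement that $\m^{{\bf a}+{\bf 1}_U}$ is an irreducible component of $I$ is \emph{not} equivalent to $(I:\mu^*)=\m^{{\bf 1}_U}$: irreducible components with radical $\m^{{\bf 1}_U}$ are read off after localizing at the variables in $Z$. Concretely, condition~(i) (``$\mu^*x^{\beta}\notin I$ for all $\beta$ supported in $Z$'') is correct, but your condition~(ii) should read ``for each $i\in U$ there exists $\gamma_i$ supported in $Z$ with $x_i\mu^*x^{\gamma_i}\in I$,'' not ``$x_i\mu^*\in I$.'' (Already for $I_G=(x_1x_2,x_2x_3,x_3x_4)$ and $U=\{2,3\}$ the component $(x_2,x_3)$ has $\mu^*=1$ and $x_2\notin I_G$, yet $x_2\cdot x_1\in I_G$.) Your step ``since $x_i\mu^*$ has no $Z$-support, $p_i$ is supported in $G_U$'' uses the unlocalized (ii) in an essential way and therefore does not follow from the hypothesis. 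The good news is that the inductive step survives the correction: with $x_i\mu^*x^{\gamma_i}=p_ir$ one checks $\deg_{x_{j_0}}(p_i)\le[i=j_0]+a_{j_0}$, so whether or not $p_i$ uses a $j_0$-edge, the resulting $(k-1)$-fold (resp.\ $k$-fold) product divides $x_i\mu^{**}x^{\gamma_i}$, and the $i=j_0$ degenerate case is again excluded by (i). You should restate the characterization with the $Z$-multipliers and redo the bookkeeping against $x_i\mu^{**}x^{\gamma_i}$ rather than $x_i\mu^{**}$; once that is done the argument is sound.
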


\medskip
Recall the Gallai-Edmonds canonical decomposition of  a graph as given by J. Edmonds \cite{E} and T. Gallai \cite{G1}. For more details see \cite[Theorem 1.5.3]{YL} or \cite[Section 4]{MD}. 
For any simple graph $G$, denote by $D(G)$ the set of all vertices in $G$ which are missed by at least one maximum matching of $G$,  and $A(G)$ the set of vertices in $V-D(G)$ that are adjacent to at least one vertex in $D(G)$. Let $C(G) =V-A(G)-D(G)$. More generally let $S\subset V$ and $G_S$ its induced subgraph of $G$. The induced graphs $A(G_S),D(G_S),C(G_S)$ will be denoted by  $A(S), D(S), C(S)$. The graph  $D(S)$ is  matching-critical if it is a disjoint union of factor-critical graphs.

\begin{thm}\label{t22newbis20} Let $k\geqslant 2$ be an integer, ${\bf a}\in \fn^d$ be a nonzero vector, $U\subset V$ such that 
$\supp({\bf a})\subset U$. Let denote  $Z:=V\setminus U$  and  $p_{\bf a}(G)$  the replication of $G$ by ${\bf a}$. 
Assume that $\supp({\bf a})\cap N(Z)=\emptyset$. Then the following statements are equivalent:
 
(i)  $\m^{{\bf a}+{\bf 1}_U}$ is an embedded irreducible component  of $I^k_G$.
 
(ii) $p_{\bf a}(G)$ and $ Z$ satisfy the following properties 
\begin{enumerate}
\item  $Z$ is  a coclique set, $ \nu(p_{\bf a}(G))= k-1$ and  $V=N_G(D(p_{\bf a}(G)))\cup Z\cup N(Z) $.
\item  $C(p_{\bf a}(G))=\emptyset $, i.e. $p_{\bf a}(G)=D(p_{\bf a}(G))\cup A(p_{\bf a}(G))$ in the Gallai-Edmonds's canonical decomposition.
\end{enumerate}
\end{thm}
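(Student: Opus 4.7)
The plan is to translate the algebraic characterization of embedded irreducible components into a matching-theoretic statement about the replicated graph $p_{\bf a}(G)$. The key dictionary is: an irreducible monomial ideal $\m^{{\bf b}+{\bf 1}_U}$ contains $I_G^k$ if and only if every $k$-fold product of edge generators of $I_G$ lies in $\m^{{\bf b}+{\bf 1}_U}$; under the replication construction, this corresponds to the matching bound $\nu(p_{\bf b}(G))\le k-1$. Irreducibility of the component then amounts to minimality (in the coordinatewise order) of $({\bf b}, U)$ subject to this bound together with the vertex-cover condition.

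For the implication (i)$\Rightarrow$(ii), I would proceed in three steps. First, $Z$ must be a coclique: otherwise an edge $P_iP_j$ with $P_i,P_j\in Z$ would yield $x_ix_j\in I_G\subseteq I_G^k$ not contained in $\m^{{\bf a}+{\bf 1}_U}$, contradicting the containment. Second, the equality $\nu(p_{\bf a}(G))=k-1$ holds, with $\le$ coming from $I_G^k\subseteq\m^{{\bf a}+{\bf 1}_U}$ via the dictionary, and equality forced by irredundancy, since a strict inequality would allow decreasing some coordinate of ${\bf a}$ while preserving the containment. Third, the Gallai--Edmonds conditions $C(p_{\bf a}(G))=\emptyset$ and $V=N_G(D(p_{\bf a}(G)))\cup Z\cup N(Z)$ follow by contradiction: any vertex in a component of $C(p_{\bf a}(G))$ admits a perfect matching on that component (by definition of $C$), and this matching can be used to shrink some $a_i$ on $C$ while preserving both the matching bound and the containment, contradicting irredundancy; the covering equality is proved similarly by dropping a spurious vertex from $U$.

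For (ii)$\Rightarrow$(i), I would construct an explicit witnessing monomial from the Gallai--Edmonds data. Condition (2) together with $\nu(p_{\bf a}(G))=k-1$ yields a near-perfect matching $M$ of $p_{\bf a}(G)$ with exactly $k-1$ edges, missing only vertices in $D(p_{\bf a}(G))$. Translating $M$ back to $R$ gives a monomial $f$ of degree $2(k-1)$ with $f\notin I_G^k$. The covering equality $V=N_G(D(p_{\bf a}(G)))\cup Z\cup N(Z)$ together with factor-criticality of the components of $D(p_{\bf a}(G))$ ensures that for each $P_i\in U$ one can extend $M$ to a matching of size $k$ using an edge incident to a copy of $P_i$, whence $x_if\in I_G^k$. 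This exhibits $\m^{{\bf 1}_U}=(I_G^k:f)$ and identifies $\m^{{\bf a}+{\bf 1}_U}$ as the corresponding irreducible component via the standard translation between associated primes and irreducible components of monomial ideals.

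The main obstacle is the exchange argument in the third step of the forward direction: showing that a nonempty $C(p_{\bf a}(G))$, or a $U$-vertex outside $N_G(D(p_{\bf a}(G)))\cup Z\cup N(Z)$, forces a strictly smaller $({\bf a}',U')$ with $I_G^k\subseteq\m^{{\bf a}'+{\bf 1}_{U'}}$. This is exactly where the hypothesis $\supp({\bf a})\cap N(Z)=\emptyset$ is essential, as otherwise an exponent on a vertex of $\supp({\bf a})\cap N(Z)$ could absorb the exchange in an uncontrolled way; the hypothesis was isolated in Theorem \ref{treduction1} so that the exchange reduces to a clean matching-swap in $p_{\bf a}(G)$. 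The reverse direction is then largely formal, provided one verifies carefully that factor-criticality of each connected component of $D(p_{\bf a}(G))$ really permits the required $+1$ extension of $M$ at any chosen vertex of $U$, which in turn is the input from Lemma \ref{l-makefc_min} on the replication side.
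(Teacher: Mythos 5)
This theorem is not proved in the present paper; it is imported verbatim from the authors' earlier work \cite{MD} (``Irreducible decomposition of powers of edge ideals''), and Section 4 of the paper you are reading uses it as a black box. So there is no ``paper's own proof'' here to compare against. Evaluating your sketch on its own terms: the high-level strategy (the dictionary $m=\prod x_i^{b_i}\in I_G^k \Leftrightarrow \nu(p_{\mathbf b}(G))\geq k$, then irredundancy via a witness monomial plus a Gallai--Edmonds exchange) is indeed the right framework, and most likely the one used in \cite{MD}.

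There is, however, a concrete gap in your $(ii)\Rightarrow(i)$ direction. You take a maximum matching $M$ of $p_{\mathbf a}(G)$, with $k-1$ edges, and declare its image $f$ in $R$ to be the witness. But the exponent vector of $f$ is the degree sequence of $M$ projected to $G$, which is only \emph{dominated} by $\mathbf a$ (often strictly, since nothing forces $M$ to saturate every copy of every $P_i$), and $f$ has no $Z$-variables at all. The irreducible ideal whose component membership you would thereby be witnessing is \emph{not} $\m^{\mathbf a+\mathbf 1_U}$: to detect $\m^{\mathbf a+\mathbf 1_U}$ as an irreducible component one needs a monomial with exponent \emph{exactly} $a_i$ on each $P_i\in U$ (including exponent $0$ on $U\setminus\supp(\mathbf a)$, so $N(Z)$ contributes nothing, which is where the hypothesis $\supp(\mathbf a)\cap N(Z)=\emptyset$ enters) \emph{and} strictly positive exponent on every $P_j\in Z$, for otherwise the colon $(I_G^k:f)$ is supported on a set strictly larger than $U$ and you at best certify some $\m^{\mathbf 1_{U'}}$ with $U'\supsetneq U$ as an associated prime, which is weaker than (i). The correct witness is something like $\prod_{i\in U}x_i^{a_i}\cdot\prod_{j\in Z}x_j^{N}$ for $N\gg0$, and showing $x_if$ lies in $I_G^k$ for each $i\in U$ is precisely where the factor-criticality of the components of $D(p_{\mathbf a}(G))$, the equality $\nu(p_{\mathbf a}(G))=k-1$, and the covering condition $V=N_G(D(p_{\mathbf a}(G)))\cup Z\cup N(Z)$ must be invoked in concert; your sketch conflates this with the easier statement about the matching monomial. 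Similarly, in $(i)\Rightarrow(ii)$ you acknowledge the exchange step that forces $C(p_{\mathbf a}(G))=\emptyset$ and the covering equality is the heart of the matter; as written it is a plausibility argument, not a proof, and without pinning down what ``shrinking $a_i$ while preserving the containment'' means for the exponent vector of the witness (again the $Z$-coordinates cannot be ignored) it cannot be checked.
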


\medskip
Moreover in \cite[Section 5]{MD} we also proved that every connected component of $D(p_{\bf a}(G))$  has at least 3 vertices.
 Let  $\m^{ {\bf 1 }_U}\in\Ass (I^{k}_G)$ be an embedded associated prime,  we define $$ \astab(I_G, U)\colon =\min \{k\mid  \m ^{{\bf 1}_{U}}\in \Ass (I^{k}_G)\}.$$ 
Note that $\dstab(I_G)=\astab(I_G, V)$.

\begin{lem}\label{embass}Let $U\subset V$. Then $\m^{ {\bf 1 }_U}\in\Ass (I^{k}_G)$ is an embedded associated prime for some $k\geq 2$ if and only if  
	\begin{enumerate}
		\item $Z=V\setminus U$ is a coclique set.
		\item There exists a vector  ${\bf a}$ such that $ \supp({\bf a })\cap (N(Z)\cup Z)=\emptyset  $, 
		$p_ {\bf a}(G)$ is matching-critical,  $V=N_G(p_ {\bf a}(G))\cup Z\cup N(Z) $ and $\nu(p_ {\bf a}(G))\leq  k-1$.
	\end{enumerate}
  As a consequence we have 
 $$\astab(I_G, U)=\min \{k\mid  \m ^{{\bf a }+{\bf 1}_{U}} \in \Irr( I^{k}_G)\} = \min \{ \nu(p_ {\bf a}(G))+1\}, $$ 
 where the vector ${\bf a}$ satisfies property (2).
  \end{lem}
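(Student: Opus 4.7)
The plan is to read off the equivalence directly from Theorems \ref{treduction1} and \ref{t22newbis20}, using the persistence property \cite{MMV} to pass from a specific irreducible component at some exponent $k_0 \le k$ to the associated prime at exponent $k$. The only bridge needed between those two theorems is a brief Gallai--Edmonds compression that replaces a witness vector whose replication only has matching-critical $D$-part by one whose replication is matching-critical throughout.

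For the forward implication, assume $\m^{{\bf 1}_U} \in \Ass(I_G^k)$ is embedded; then $I_G^k$ admits an irreducible component $\m^{{\bf b}+{\bf 1}_U}$ with $\supp({\bf b}) \subset U$. I first strip off the part of $\supp({\bf b})$ meeting $N(Z)$ via Theorem \ref{treduction1}: writing ${\bf b} = {\bf b}' + {\bf c}$ with $\supp({\bf c}) = \supp({\bf b}) \cap N(Z)$ and $\delta = \lvert {\bf c}\rvert$, the residue gives an embedded component $\m^{{\bf b}'+{\bf 1}_U} \in \Irr(I_G^{k-\delta})$ with $\supp({\bf b}') \cap N(Z) = \emptyset$. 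Theorem \ref{t22newbis20} then forces $Z$ to be a coclique (yielding (1)), together with $C(p_{{\bf b}'}(G)) = \emptyset$, $\nu(p_{{\bf b}'}(G)) = k-\delta-1$ and $V = N_G(D(p_{{\bf b}'}(G))) \cup Z \cup N(Z)$. To extract a vector witnessing (2), I set $a_i$ equal to the number of copies of $P_i$ that land in $D(p_{{\bf b}'}(G))$; since any two duplicates of a single vertex of $G$ have identical neighborhoods in $p_{{\bf b}'}(G)$, the replication $p_{\bf a}(G)$ is isomorphic to the matching-critical subgraph $D(p_{{\bf b}'}(G))$. Hence $\supp({\bf a}) \subset \supp({\bf b}')$, which gives $\supp({\bf a}) \cap (N(Z)\cup Z) = \emptyset$; the covering $V = N_G(p_{\bf a}(G)) \cup Z \cup N(Z)$ is inherited; and a Gallai--Edmonds count, exploiting $C(p_{{\bf b}'}(G)) = \emptyset$, yields $\nu(p_{\bf a}(G)) = \nu(p_{{\bf b}'}(G)) - \lvert A(p_{{\bf b}'}(G))\rvert \le k-1$.

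For the converse, any ${\bf a}$ satisfying (2) has $D(p_{\bf a}(G)) = p_{\bf a}(G)$ and $A(p_{\bf a}(G)) = C(p_{\bf a}(G)) = \emptyset$ by matching-criticality, so the hypotheses of Theorem \ref{t22newbis20}(ii) hold at exponent $k_0 := \nu(p_{\bf a}(G))+1 \le k$. This produces $\m^{{\bf a}+{\bf 1}_U} \in \Irr(I_G^{k_0})$ and hence $\m^{{\bf 1}_U} \in \Ass(I_G^{k_0}) \subset \Ass(I_G^k)$ by persistence. The formula for $\astab(I_G,U)$ then follows: by Theorem \ref{t22newbis20}, each embedded irreducible component $\m^{{\bf a}+{\bf 1}_U}$ appears at exponent exactly $\nu(p_{\bf a}(G))+1$, so the smallest $k$ for which $\m^{{\bf 1}_U}$ is associated to $I_G^k$ coincides with $1 + \min \nu(p_{\bf a}(G))$ over admissible ${\bf a}$. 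The main obstacle I anticipate is the Gallai--Edmonds compression: one must verify that $D(p_{{\bf b}'}(G))$ is genuinely a replication of $G$, not a more exotic subgraph, and this rests on the symmetry of duplicates under the canonical GE partition together with the fact that the single integer $a_i$ records all the information needed to reconstruct the $D$-part as $p_{\bf a}(G)$.
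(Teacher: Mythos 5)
Your argument follows the paper's own route exactly: strip the $N(Z)$-part of the witness vector via Theorem~\ref{treduction1}, read off the coclique and covering conditions from Theorem~\ref{t22newbis20}, compress to the Gallai--Edmonds $D$-part to obtain a matching-critical replication, and invoke persistence for the converse and the $\astab$ formula. Where the paper's proof is terse --- it leaves the converse implicit and simply asserts that $D(p_{\bf b}(G))$ is a replication $p_{\bf a}(G)$ --- you correctly fill in the details, in particular the observation that duplicates of a vertex fall into the same Gallai--Edmonds class so that the $D$-part is indeed a replication of $G$.
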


\begin{proof} Let $U\subset V$  and  $Z=V\setminus U$. Then $ \m^{ {\bf 1 
		}_U}\in\Ass (I^{k}_G)$ if and only if  there exists a vector ${\bf b 
	}$ such that  $\m^{{\bf b}+{\bf 1 }_U}$ is an embedded irreducible 
	component of $I_G ^{k}$. By  Theorem  \ref{treduction1} we can 
	assume that $ \supp({\bf b  })\cap (N(Z)\cup Z)=\emptyset $. By 
	Theorem \ref{t22newbis20}, we have that $Z$ is a coclique set,   
	$V=N_G(p_ {\bf b }(G))\cup Z\cup N(Z) $, $\nu(p_ {\bf b }(G))=k-1$, 
	and $p_{\bf b}(G)=D(p_{\bf b }(G))\cup A(p_{\bf b }(G))$.  The graph 
	$D(p_{\bf b }(G))$ is matching-critical and we have by Gallai-Edmonds   
	decomposition theorem that  $\nu(D(p_{\bf b }(G))= 
	k-1-\card(A(p_{\bf b}(G)))$. Let  ${\bf a}$ be the vector such that   
	$p_{\bf a}(G)=D(p_{\bf b }(G))$. We can see  $p_{\bf a}(G)$ satisfies 
	the conditions of  Theorem \ref{t22newbis20}, so   $\m^{{\bf a }+{\bf 
			1 }_U}$ is an embedded irreducible component of $I_G ^{k'}$, where   
	$k'=k-\card(A(p_{\bf b}(G)))\leq k.$
\end{proof}

\medskip
Now we can describe the associated primes of $I^{k}_G$ for  $k\geq 2$. Our result extends \cite[Proposition 3.3]{MMV}  which considered the case of $\m^{ {\bf 1 }_V}$  and \cite[Theorem 4.1 ]{CMS}.

 \begin{thm}\label{Tembass}
  $\m^{ {\bf 1 }_U}$ is an embedded associated prime of $I^{k}_G$ for some $k$ if and only if $Z:= V\setminus  U$ is a coclique set, eventually the empty set,  $U\not=N(Z)$   and every connected component of $G_{U\setminus N(Z)}$ contains an odd cycle.
   \end{thm}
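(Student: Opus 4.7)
The strategy is to read this off from Lemma \ref{embass}. That lemma already shows that ``$\m^{{\bf 1}_U}$ is an embedded associated prime of $I_G^k$ for some $k\ge 2$'' is equivalent to (i) $Z:=V\setminus U$ being a coclique, and (ii) the existence of a vector ${\bf a}$ with $\supp({\bf a})\cap (Z\cup N(Z))=\emptyset$ (which forces $\supp({\bf a})\subset W:=U\setminus N(Z)=V\setminus(Z\cup N(Z))$) such that $p_{{\bf a}}(G)$ is matching-critical and $V=N_G(\supp({\bf a}))\cup Z\cup N(Z)$. So what remains is to identify (ii), under (i), with the two combinatorial conditions $W\neq \emptyset$ and ``every connected component of $G_W$ contains an odd cycle''.

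For the reverse direction I would construct ${\bf a}$ explicitly. Write $G_W=C_1\sqcup\cdots\sqcup C_t$; by hypothesis each $C_i$ is connected and non-bipartite, so Lemma \ref{l-makefc_min} yields a vector ${\bf a}_i$ supported on $C_i$ such that $p_{{\bf a}_i+{\bf 1}_{C_i}}(C_i)$ is factor-critical. Setting ${\bf a}:={\bf 1}_W+\sum_i{\bf a}_i$, one has $\supp({\bf a})=W$, and since no $G$-edge crosses between distinct $C_i$'s the graph $p_{{\bf a}}(G)$ splits as $\bigsqcup_i p_{{\bf a}_i+{\bf 1}_{C_i}}(C_i)$, a disjoint union of factor-critical graphs, hence matching-critical. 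The dominance identity reduces to $W\subset N_G(W)$, which holds because no $C_i$ is a single isolated vertex (each carries an odd cycle).

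For the forward direction I would start from any admissible ${\bf a}$ and use the projection $p_{{\bf a}}(G)\to G,\ P_i^{(\ell)}\mapsto P_i$: an odd cycle in $p_{{\bf a}}(G)$ maps to an odd closed walk in $G_{\supp({\bf a})}$, which contains an odd cycle. By matching-criticality, every connected component of $p_{{\bf a}}(G)$, equivalently of $G_{\supp({\bf a})}$, carries such a cycle. To transfer this to $G_W$, note that the dominance identity, combined with $\supp({\bf a})\cap (Z\cup N(Z))=\emptyset$, forces $W\subset N_G(\supp({\bf a}))$: any vertex $v$ of a connected component $C$ of $G_W$ either lies in $\supp({\bf a})$ or has a neighbor in $\supp({\bf a})$, which must itself lie in $W$ and hence in $C$. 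The $G_{\supp({\bf a})}$-component of this vertex is contained in $C$ and contributes the required odd cycle. The embedded case $k\ge 2$ automatically guarantees $\supp({\bf a})\neq\emptyset$, hence $W\neq\emptyset$.

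The main technical point I expect to be delicate is this last step. In general $G_{\supp({\bf a})}$ can be a proper induced subgraph of $G_W$, so matching-criticality of $p_{{\bf a}}(G)$ on its own only produces odd cycles inside this possibly smaller graph. The dominance identity $V=N_G(\supp({\bf a}))\cup Z\cup N(Z)$ is what I expect to do the heavy lifting: it ensures that $\supp({\bf a})$ meets every connected component of $G_W$, so that each such component inherits an odd cycle from an odd cycle of $G_{\supp({\bf a})}$.
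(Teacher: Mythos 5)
Your proposal is correct and follows essentially the same route as the paper: reduce to Lemma \ref{embass}, use Lemma \ref{l-makefc_min} to build the matching-critical replication on $G_{U\setminus N(Z)}$ for the converse, and for the forward direction project odd cycles from the factor-critical components of $p_{\bf a}(G)$ into $G_{\supp(\bf a)}$ and then propagate them to $G_{U\setminus N(Z)}$ via the dominance identity. The one step you flag as delicate is handled by the paper in a terser but equivalent way, by writing $U\setminus N(Z)=\supp({\bf a})\cup T$ with $T\subset N(\supp({\bf a}))\setminus\supp({\bf a})$ and observing that attaching neighbors cannot create a new odd-cycle-free component; your phrasing (every component of $G_{U\setminus N(Z)}$ contains a component of $G_{\supp(\bf a)}$) is a clean restatement of the same fact. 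One small remark: the paper's proof cites Lemma \ref{l-makefc} where it in fact needs the constructive Lemma \ref{l-makefc_min} — you cited the right one.
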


 \begin{proof}  If  $\m^{ {\bf 1 }_U}$ is an embedded associated prime of $I^{k}_G$ for some $k$, then by Lemma \ref{embass}, $Z$ is a coclique set,  there exists a vector ${\bf a }$ such that $ \supp({\bf a })\cap (N(Z)\cup Z)=\emptyset  $, $p_ {\bf a}(G)$ is matching-critical and $V=N_G(p_ {\bf a}(G))\cup Z\cup N(Z) $. Let $W=\supp({\bf a })$ and $G_W$  the induced subgraph of $G$. Since $p_ {\bf a}(G)$ is matching-critical and every connected component has at least 3 vertices,  then every connected component of $G_W$ has an odd cycle.
 If  $T\subset N(W)\setminus W$ then every connected component of $G_{W\cup T }$ has an odd cycle. Note that    
 $U\setminus N(Z)=N(W)\setminus N(Z) =W\cup T$ for some subset $T\subset N(W)\setminus W$. So every connected component of $G_{U\setminus N(Z)}$ has an odd cycle. Reciprocally if $U\not=N(Z)$   and every connected component of $G_{U\setminus N(Z)}$ contains an odd cycle, then by Lemma \ref{l-makefc}, there exists a vector  ${\bf b },\supp({\bf b })\subset U\setminus N(Z) $ such that 
$p_ {{\bf b}+{\bf 1}_{U\setminus N(Z)}}(G))$ is matching-critical and $V=N_G(p_ {{\bf b}+{\bf 1}_{U\setminus N(Z)}}(G)))\cup Z\cup N(Z).$ Thus  by Theorem  \ref{t22newbis20} we have that $\m^{ {\bf 1 }_U}$ is an embedded associated prime of $I^{k}_G$ for some $k=\nu(p_ {{\bf b}+{\bf 1}_{U\setminus N(Z)}}(G)))+1$.
 \end{proof}
 
 \medskip
Once we know a stable  associated prime $\m^{ {\bf 1 }_U}$,   finding the smallest $k$ such that  $\m^{ {\bf 1 }_U}$ is an associated prime of $I_G^k$ requires a finer analysis of some subsets of $U$.

\begin{defn}\label{d-dominant*}Let $G=(V,E)$ be a simple, connected non bipartite graph. Let $U\subset V$ and $Z=V\setminus U$. If $Z$ is an independent set then a set $W\subset V\setminus (Z\cup N(Z))$ is called {\it $U$-dominant}  
if $V=N(W)\cup N(Z)\cup Z$ and  {\it $U$-dominant*} if either $W=\emptyset$ or it is $U$-dominant and every connected component of $G_W$ contains an odd cycle.  We will denote by $\calD^*(U)$ the set of all subsets $W\subset U$ that are $U$-dominant*. If $Z$ is not an independent set then we set  $ \calD^*(U)= \emptyset $.
 \end{defn}

\begin{rem} With the notation introduced in \ref{d-dominant*}, we have that $\m ^{{\bf 1}_{U}}$ is a associated prime of $I_G$ if and only if $\calD^*(U)=\{ \emptyset  \}$. Theorem \ref{Tembass} can be reformulated as follows: 
 $\m^{ {\bf 1 }_U}$ is an embedded associated prime of $I^{k}_G$ for some $k$ if and only if  $\calD^*(U)\not= \emptyset, \{ \emptyset  \}$ and in this case we have
 $U\setminus  N(Z)\in  \calD^*(U)$.
\end{rem}

In  \ref{def-makefc} we have defined $\nu ^*(G_W)$ for each $\emptyset \not =W\in \calD^*(U)$. If $W= \emptyset$ then we set $\nu ^*(G_W)=0$. 

\begin{ex}\label{ex-astab} 
Let $G=(V,E)$ be a graph with the vertex set $V=\{a,b,c,d,e,f,g,h\}$ as in the figure \ref{Fig.1}. Now we look for sets $U\subset V$ such that $ \calD^*(U)\not= \emptyset$. Note that $Z=V\setminus U$  is an independent set and if $W\not= \emptyset$  is  $U$-dominant*, then 
$G_W$ is connected and contains the cycle with vertices $a,b,c$, since there is a unique odd cycle in $G$. Note also that $N(W)\cap Z=\emptyset $. We can describe the possible cases of the independent set   $Z$  such that $ \calD^*(U)\not= \emptyset$. 
\begin{enumerate}
\item If $\card(Z)=0$ then $ \calD^*(U)\not= \emptyset$ if and only if $U_1=V$ and $W_1=\{a,b,c,d,e\}\in \calD^*(U_1), \nu ^*(G_{W_1})=3$.
\item If $\card(Z)=1$ then $ \calD^*(U)\not= \emptyset$ if and only if $Z_2=\{ h \}$. We have $U_2=\{a,b,c,d,e,f,g\}$ and $W_2=\{a,b,c,d,g\} \in \calD^*(U_2), \nu ^*(G_{W_2})=3$.
\item If $\card(Z)=2$ then $ \calD^*(U)\not= \emptyset$ if and only if $Z_3=\{ e,g \}$, $Z_4=\{ f,h \}$, or $Z_5=\{ g,h \}$ . We have $U_3=\{a,b,c,d,f,h\}$ $U_4=\{a,b,c,d,e,g\}$ or $U_5=\{a,b,c,d,e,f\}$. and $W_3=W_4=W_5=\{a,b,c\} \in \calD^*(U_i), \nu ^*(G_{W_i})=1$, for  $i=3,4,5$.
 \item If $\card(Z)\geq 3$ then $Z$ is maximal, $\calD^*(U)=\{ \emptyset  \}$. 
 \end{enumerate} As a consequence we have:
 \begin{enumerate}
\item $\Ass (I^{2}_G)\setminus \Ass (I_G)= \{\m^{{\bf 1 }_{U_3}}, \m^{{\bf 1 }_{U_4}},\m^{{\bf 1 }_{U_5}}\}$, $\Ass (I^{3}_G)= \Ass (I^{2}_G)$,
\item $\Ass (I^{4}_G)\setminus \Ass (I^{3}_G)= \{\m^{{\bf 1 }_{U_1}},\m^{{\bf 1 }_{U_2}}\}$, $\Ass (I^{i}_G)= \Ass (I^{i+1}_G)$ for $i\geq 4$,
\item $ \astab(I_G)= \dstab(I_G)=4$.
 \end{enumerate}
  \end{ex}

\begin{thm}\label{th-astab} Let $G$ be a simple, connected non bipartite graph, $U\subset V$ such that $\m^{ {\bf 1 }_U}$ is an embedded associated prime of $I^{k}_G$ for some $k$. Then
$$ \astab(I_G, U)=\min \{ 1+\nu ^*(G_W)\mid W \in \calD^*(U) \}.$$
   \end{thm}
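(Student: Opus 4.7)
The plan is to combine Lemma~\ref{embass}, which encodes $\astab(I_G,U)$ as a minimum of $\nu(p_{\bf a}(G))+1$ over vectors ${\bf a}\in\fn^d$ satisfying three properties (support disjoint from $N(Z)\cup Z$, replication matching-critical, and $V=N_G(p_{\bf a}(G))\cup Z\cup N(Z)$), with Theorem~\ref{th-makefc}, which for any disjoint union $H$ of connected non-bipartite graphs computes the minimum of $\nu(p_{{\bf b}+{\bf 1}_H}(H))$ over matching-critical replications as $\nu^*(H)$. The bridge between the two is that a replication vector~${\bf a}$ with $W=\supp({\bf a})$ automatically factors through $G_W$: by Definition~\ref{df-replication} one has $p_{\bf a}(G)=p_{{\bf a}|_W}(G_W)$ and $N_G(p_{\bf a}(G))=N(W)$. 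This reorganizes the single minimization in Lemma~\ref{embass} as an iterated minimization, first over the support $W$ and then over multiplicities on $W$.

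The three conditions on~${\bf a}$ then translate into conditions on $W=\supp({\bf a})$: the support condition becomes $W\subset V\setminus(N(Z)\cup Z)$; the neighborhood condition becomes the $U$-dominance identity $V=N(W)\cup N(Z)\cup Z$; and matching-criticality of $p_{\bf a}(G)$ forces every connected component of $G_W$ to contain an odd cycle, because a bipartite connected component would produce a bipartite, hence non-factor-critical, component in the replication (a bipartite graph on at least two vertices has no perfect matching after deleting a vertex of the smaller part). Together these say precisely that $W\in\calD^*(U)$; moreover $W\ne\emptyset$, since $\m^{{\bf 1}_U}$ is embedded and hence $Z$ is not a maximal independent set, which forces ${\bf a}$ to have nonzero support. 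Conversely, every non-empty $W\in\calD^*(U)$ arises from some admissible vector, by invoking Lemma~\ref{l-makefc_min} componentwise on $G_W$.

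With this parametrization in hand, for fixed $W\in\calD^*(U)$ the inner minimization is to minimize $\nu(p_{\bf a}(G))$ over ${\bf a}$ with $\supp({\bf a})=W$ such that $p_{\bf a}(G)$ is matching-critical. Writing ${\bf a}|_W={\bf b}+{\bf 1}_W$ with $\supp({\bf b})\subset W$ identifies this with the minimization in Theorem~\ref{th-makefc} applied to the disjoint union of connected non-bipartite graphs $G_W$, whose value is $\nu^*(G_W)$. Swapping the order of the two minima yields $\astab(I_G,U)=\min\{1+\nu^*(G_W)\mid W\in\calD^*(U)\}$, as claimed. The main obstacle I anticipate is the careful bookkeeping needed to show the equivalence of the two minimization problems, in particular the translation of matching-criticality into the combinatorial condition that every connected component of $G_W$ contains an odd cycle; once that is set up, the rest follows directly from Lemma~\ref{embass} and Theorem~\ref{th-makefc}.
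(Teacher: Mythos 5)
Your proposal follows the same route as the paper: reduce via Lemma~\ref{embass} to minimizing $\nu(p_{\bf a}(G))+1$ over admissible replication vectors, reorganize this as an iterated minimum — first over the support $W=\supp({\bf a})$, then over the multiplicities on $W$ — and then invoke Lemma~\ref{l-makefc_min}/Theorem~\ref{th-makefc} componentwise on $G_W$ to evaluate the inner minimum as $\nu^*(G_W)$. That is precisely the paper's argument.

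One step in your proof needs repair. To show $W=\supp({\bf a})\in\calD^*(U)$ you argue that a bipartite connected component of $G_W$ would yield a bipartite, hence non-factor-critical, component of $p_{\bf a}(G)$; your parenthetical caveat ``on at least two vertices'' points to the hole. A single vertex is bipartite and factor-critical, so if $G_W$ had an isolated vertex, the corresponding replication components would be isolated vertices (one or several, depending on the multiplicity), which are factor-critical, and your argument would not exclude this configuration, even though $W$ would then fail to lie in $\calD^*(U)$. The paper closes this gap by citing, just before Theorem~\ref{Tembass}, the result from \cite[Section 5]{MD} that every connected component of $D(p_{\bf a}(G))$ has at least $3$ vertices whenever $p_{\bf a}(G)$ arises from an embedded irreducible component. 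Since $p_{\bf a}(G)$ is matching-critical, $D(p_{\bf a}(G))=p_{\bf a}(G)$, so every component of $p_{\bf a}(G)$ has at least $3$ vertices; this rules out isolated vertices in $G_W$ (an isolated vertex of $G_W$ would give rise only to $1$-vertex components of $p_{\bf a}(G)$), after which your bipartiteness argument does apply to the remaining components. With that citation in place of the parenthetical, your proof is complete and coincides with the paper's.
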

 
 \begin{proof} Let $p_{{\bf a}}(G)$ be a matching-critical graph such that $\m^{{\bf a }+{\bf 1 }_U}$ is an embedded irreducible component of $I_G ^{k}$. Let $W=\supp( {\bf a}),$ then   $W\in \calD^*(U) $. Let $W_1,\dots, W_ {t_W}$ be the connected components of $G_W$ and  ${\bf a}_i$  the restriction of  ${\bf a}$ to $W_i$, so that ${\bf a}=\sum_{i=1}^{t_W}{\bf a}_i$,  and $p_{{\bf a}_i}(G)$  is factor-critical. There exist by theorem \ref{th-makefc} the vectors   ${\bf b}_i$ such that $p_{{\bf b}_i+ {\bf 1}_{W_i}}(G)$ is  factor-critical and
$$\nu(p_{{\bf b}_i+ {\bf 1}_{W_i}}(G))= \frac{1}{2}(\card(W_i)+\varphi  (G_{W_i})+\psi (G_{W_i})-1) \leq \nu(p_{{\bf a}_i}(G)).$$
Let ${\bf b}(W)=\sum_{i=1}^{t_W}{\bf b}_i$. It  follows that $p_{{\bf b}(W)+{\bf 1}_W}(G)$ is matching-critical and
$$ \nu (p_{{\bf b}(W)+{\bf 1}_W}(G))=\min \{ \nu (p_{{\bf a}}(G))\mid  W=\supp( {\bf a}), p_{{\bf a}}(G) \text {is \ matching-critical}\}.$$
Note that $\nu(p_{{\bf b}+ {\bf 1}_{W}}(G)=\nu ^*(G_W)$. Hence
$$ \astab(I_G, U)=\min \{ 1+\nu ^*(G_W)\mid W \in \calD^*(U) \}.$$ 
  \end{proof}
\begin{cor}\label{c-astab} Let $G$ be a simple, connected non bipartite graph. Then 
$$\Ass (I^{k}_G) =\{ \m ^{{\bf 1}_{U}} \mid U\subset V, \calD^*(U)\not=\emptyset,\min_{ W\in \calD^*(U) }\nu ^*(G_W)\leq   k-1  \}.$$
   \end{cor}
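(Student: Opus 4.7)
The plan is to deduce this corollary essentially by combining Theorem~\ref{Tembass} with Theorem~\ref{th-astab} and handling the non-embedded case separately. The statement is an equivalence of the form ``$\m^{{\bf 1}_U}\in \Ass(I_G^k)$ iff a certain numerical condition holds'', so I would prove both inclusions by unpacking the definition of $\astab(I_G,U)$.

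First I would split the membership $\m^{{\bf 1}_U}\in \Ass(I_G^k)$ into two cases: either $\m^{{\bf 1}_U}\in\Ass(I_G)$ (the minimal case) or $\m^{{\bf 1}_U}$ is an embedded associated prime. In the minimal case, it is standard that this happens exactly when $Z:=V\setminus U$ is a maximal coclique, i.e.\ $V=Z\cup N(Z)$. With this condition $W=\emptyset$ satisfies $V=N(\emptyset)\cup N(Z)\cup Z$ (using the convention $N(\emptyset)=\emptyset$), so $\emptyset\in\calD^*(U)$, and by the extension of Definition~\ref{def-makefc} we have $\nu^*(G_\emptyset)=0\leq k-1$ for every $k\geq 1$. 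By the persistence property, $\m^{{\bf 1}_U}\in\Ass(I_G^k)$ for all such $k$, which matches the right-hand side.

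For the embedded case, I would invoke Theorem~\ref{Tembass} to get that $\m^{{\bf 1}_U}$ is an embedded associated prime of some $I_G^k$ iff $Z$ is a coclique, $U\neq N(Z)$, and every connected component of $G_{U\setminus N(Z)}$ contains an odd cycle, which is precisely the condition that $\calD^*(U)\not=\{\emptyset\}$ and $U\setminus N(Z)\in\calD^*(U)$. Once $\calD^*(U)\neq\emptyset$, Theorem~\ref{th-astab} provides the formula
\[
\astab(I_G,U)=\min\{1+\nu^*(G_W)\mid W\in\calD^*(U)\}.
\]
By definition of $\astab(I_G,U)$ and by the strong persistence property (Theorem~5.14 of \cite{MD} cited in the introduction of the section), $\m^{{\bf 1}_U}\in\Ass(I_G^k)$ if and only if $\astab(I_G,U)\leq k$, i.e.\ if and only if $\min_{W\in\calD^*(U)}\nu^*(G_W)\leq k-1$.

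Putting these two cases together yields exactly the description of $\Ass(I_G^k)$ in the statement. The only potentially delicate point is bookkeeping the convention $\nu^*(G_\emptyset)=0$ so that the minimal primes are covered by the same formula as the embedded ones; apart from that, the corollary is a direct translation of Theorem~\ref{th-astab} from the per-$U$ stabilization index to a uniform description of $\Ass(I_G^k)$ for each $k$.
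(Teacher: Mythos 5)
Your proposal is correct and follows essentially the same route as the paper, which simply reduces the corollary to Theorem~\ref{th-astab} together with the observation that, by persistence and the definition of $\astab(I_G,U)$, one has $\m^{{\bf 1}_U}\in\Ass(I_G^k)$ iff $\astab(I_G,U)\leq k$. You spell out the minimal-versus-embedded case split and the convention $\nu^*(G_\emptyset)=0$ more explicitly than the paper does (and invoke strong persistence where ordinary persistence already suffices), but the underlying argument is identical.
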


 \begin{proof} By the definition of  
$\Ass (I^{k}_G) =\{ \m ^{{\bf 1}_{U}} \mid U\subset V, \astab(I_G, U)\leq  k  \}$, we can see that the proof follows from the above theorem.
  \end{proof}

  As an immediate consequence of Theorem \ref{Tembass} and Corollary  \ref{c-astab} we have 
   
\begin{thm}\label{astab-main} Let $G$ be a simple, connected non bipartite graph. We have 
$$ \astab(I_G)=\max_{\{U\subset V \mid \calD^*(U)\not=\emptyset \}}\{\min \{ 1+\nu ^*(G_W) \mid W \in \calD^*(U)\}  \}.$$
$$ \dstab(I_G)=\min \{ 1+\nu ^*(G_W)\mid W \in \calD^*(V)\}.$$ 
\end{thm}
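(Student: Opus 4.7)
The plan is to deduce both formulas directly from Corollary \ref{c-astab}, which already characterizes $\Ass(I^k_G)$ for every $k$. Throughout I will use the persistence property $\Ass(I_G^k)\subset \Ass(I_G^{k+1})$ established in \cite{MMV}, which guarantees that the sets $\Ass(I_G^k)$ form an ascending chain stabilizing at $\astab(I_G)$. The idea is that Corollary \ref{c-astab} reduces the computation of each $\Ass(I_G^k)$ to the combinatorial quantity $\min_{W\in\calD^*(U)}\nu^*(G_W)$, so the stabilization index and the first index at which $\m^{{\bf 1}_V}$ appears are simply an outer maximum and a single value of this same quantity.

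For the formula for $\astab(I_G)$, I would first introduce the auxiliary quantity
\[
\astab(I_G,U)\;=\;1+\min\bigl\{\nu^*(G_W)\mid W\in\calD^*(U)\bigr\},
\]
defined for every $U\subset V$ with $\calD^*(U)\neq\emptyset$. By Theorem \ref{Tembass} (combined with the observation that $\m^{{\bf 1}_U}\in\Ass(I_G)$ when $\calD^*(U)=\{\emptyset\}$), the finite set $\bigcup_{k\geq 1}\Ass(I_G^k)$ is exactly $\{\m^{{\bf 1}_U}\mid \calD^*(U)\neq\emptyset\}$. Corollary \ref{c-astab} says that $\m^{{\bf 1}_U}\in\Ass(I_G^k)$ if and only if $k\geq \astab(I_G,U)$. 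Combining this with the persistence property, $\astab(I_G)$ is precisely the smallest $k$ exceeding every $\astab(I_G,U)$, giving
\[
\astab(I_G)=\max_{\{U\subset V\mid \calD^*(U)\neq\emptyset\}}\astab(I_G,U),
\]
which is the asserted formula. This step is purely formal.

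For $\dstab(I_G)$, the maximal ideal corresponds to $U=V$ and $Z=\emptyset$, which is vacuously a coclique, so $\calD^*(V)\neq\emptyset$; indeed $W=V$ itself lies in $\calD^*(V)$ because $G$ is connected non-bipartite (so $N(V)=V$ and each connected component of $G_V=G$ contains an odd cycle). Applying Corollary \ref{c-astab} to $U=V$ directly yields
\[
\dstab(I_G)\;=\;\astab(I_G,V)\;=\;\min\bigl\{1+\nu^*(G_W)\mid W\in\calD^*(V)\bigr\},
\]
as claimed.

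There is no substantive obstacle beyond these bookkeeping manipulations; the real content is already packaged in Theorem \ref{Tembass} (which fixes the ambient set of candidate primes) and Corollary \ref{c-astab} (which pinpoints the first power at which each candidate appears). The only minor point to verify carefully is the non-emptiness of $\calD^*(V)$ in the $\dstab$ formula, so that the stated minimum is attained by an actual subset $W$; this follows at once from the non-bipartite hypothesis on $G$.
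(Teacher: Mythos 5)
Your argument is correct and follows essentially the same route as the paper: both reduce the statement to the already-established identities $\dstab(I_G)=\astab(I_G,V)$ and $\astab(I_G)=\max_U\astab(I_G,U)$, then plug in the formula $\astab(I_G,U)=1+\min_{W\in\calD^*(U)}\nu^*(G_W)$ from Theorem~\ref{th-astab} (equivalently, Corollary~\ref{c-astab}). Your invocation of the persistence property from \cite{MMV} is harmless but redundant, since Corollary~\ref{c-astab} already characterizes $\Ass(I_G^k)$ by a monotone condition in $k$; everything else, including the check that $V\in\calD^*(V)$, matches the paper's intended reading.
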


 \begin{proof} We have  $\dstab(I_G)=\astab(I_G, V)$ and  $$ \astab(I_G)=\max_{\{U\subset V \mid  \calD^*(U)\not=\emptyset \}}\{\astab(I_G,U) \}.$$
 \end{proof}

Now we can give upper bounds for $\astab(I_G)$ and $\dstab(I_G, V)$.
 
\begin{thm}\label{bound-a-d-stab} Let $G$ be a simple connected non bipartite graph. Then we have
$$ \dstab(I_G)\leq  \frac{1}{2}( \card(E(G))+\psi (G)-1)+1.$$
$$ \astab(I_G)\leq \max_U \{  \frac{1}{2}(\card(E(G_{U\setminus N(Z)}))+\psi (G_{U\setminus N(Z)})-1)+1\},$$
 where the $\max $ is taken over all the sets $U\subset V(G)$ such that $Z\colon=V\setminus U$ is an independent set,  $U\not=N(Z)$   and every connected component of $G_{U\setminus N(Z)}$ contains an odd cycle.
 \end{thm}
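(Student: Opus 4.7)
The plan is to deduce both inequalities from the explicit formulas of Theorem~\ref{astab-main} by exhibiting, for each set $U$ appearing in the outer maximum, a specific element $W\in\calD^*(U)$ whose $\nu^*(G_W)$ admits a clean bound in terms of $\card(E)$.

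The key combinatorial input is the following inequality, valid for every connected non-bipartite graph $H$:
$$\varphi(H)\leq \card(E(H))-\card(V(H)).$$
To obtain it I invoke Lemma~\ref{number-ears}, according to which every generalized ear decomposition of $H$ uses exactly $\card(E(H))-\card(V(H))+\psi(H)+1$ pieces. Since $H$ is non-bipartite, the main result of Section~5 lets me realize the minimum number $\varphi(H)$ of even ears in a decomposition whose initial piece $F_0$ is an odd cycle. In such a decomposition the initial cycle and the $\psi(H)$ bridges account for $1+\psi(H)$ of the pieces, so at most $\card(E(H))-\card(V(H))$ of them can be even ears.

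For $\dstab(I_G)$ I take $U=V$, so that $Z=\emptyset$ and $W=V$ lies in $\calD^*(V)$ because $G$ is connected non-bipartite (hence $V=N(V)$ and $G_V$ has an odd cycle). Theorem~\ref{astab-main} then gives
$$\dstab(I_G)\leq 1+\nu^*(G)=1+\tfrac{1}{2}\bigl(\card(V(G))+\varphi(G)+\psi(G)-1\bigr),$$
and substituting the key inequality reduces the right-hand side to exactly $\tfrac{1}{2}(\card(E(G))+\psi(G)-1)+1$.

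For $\astab(I_G)$, given $U$ satisfying the stated hypotheses, I set $Z=V\setminus U$ and choose $W=U\setminus N(Z)\in\calD^*(U)$, whose membership follows from the remark after Definition~\ref{d-dominant*}. Decomposing $G_W$ into its connected components $G_{W_1},\ldots,G_{W_{t_W}}$, each of which is connected and non-bipartite by assumption, and exploiting the additivity of $\varphi$, $\psi$, $\card(E)$ and $\card(V)$ under disjoint unions, I apply the key inequality to each $G_{W_i}$ and sum to obtain $\varphi(G_W)\leq \card(E(G_W))-\card(W)$. Inserting this into the definition of $\nu^*(G_W)$ and using the trivial estimate $t_W\geq 1$ yields
$$1+\nu^*(G_W)\leq \tfrac{1}{2}\bigl(\card(E(G_W))+\psi(G_W)-1\bigr)+1,$$
and taking the maximum over admissible $U$ in Theorem~\ref{astab-main} finishes the argument.

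The only subtle point I anticipate is the justification of the key inequality: it rests on Section~5's structural theorem that in a non-bipartite connected graph the first piece of a generalized ear decomposition realizing $\varphi(G)$ may be chosen to be an odd cycle. Once this is in hand, everything else is a direct computation using the formulas of Theorem~\ref{astab-main} and the componentwise additivity of the ear-decomposition invariants.
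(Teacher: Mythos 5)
Your proof is correct and follows essentially the same route as the paper's: the key inequality $\varphi(H) \leq \card(E(H)) - \card(V(H))$ is deduced from Lemma~\ref{number-ears}, and both bounds then drop out of the formulas of Theorem~\ref{astab-main} by choosing $W = V$ (respectively $W = U \setminus N(Z)$) and summing over connected components. One small remark: for this upper bound you only need the \emph{existence} of some generalized ear decomposition of $H$ starting with an odd cycle (which is already available from the discussion after Withney's theorem); invoking the full optimality result $\varphi_{\mathrm{odd}} = \varphi$ from Section~5 is harmless but stronger than necessary.
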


 \begin{proof} By Lemma \ref{number-ears}, for every connected non bipartite graph $H$ we have $$\varphi  (H)+\psi (H)\leq \card(E(H))-\card(V(H))+\psi (H).$$ Hence 
 $$\nu ^*(H)=  \frac{1}{2}(\card(V(H))+\varphi  (H)+\psi (H)-1)\leq  \frac{1}{2}(\card(E(H))+\psi (H)-1).$$ 
 Our claim follows immediately from Theorems \ref{Tembass} and Theorem  \ref{astab-main}.
 \end{proof} 

\medskip
As a consequence we get a  very simple upper bound for $\astab(I_G)$.

\begin{thm}\label{bound-astab} Let $G$ be a simple connected non bipartite graph and $2k-1$  the minimum length of an odd cycle in $G$. Then we have 
$$ \astab(I_G)\leq \card(E(G))-k+1.$$
\end{thm}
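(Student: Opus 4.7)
The plan is to combine the previous theorem with a simple graph-theoretic observation: an edge that lies on a cycle cannot be a bridge. First I would invoke Theorem~\ref{bound-a-d-stab}, which gives
$$\astab(I_G)\leq \max_U \Bigl\{\tfrac{1}{2}\bigl(\card(E(G_{U\setminus N(Z)}))+\psi(G_{U\setminus N(Z)})-1\bigr)+1\Bigr\},$$
the maximum ranging over subsets $U\subset V$ with $Z=V\setminus U$ independent, $U\neq N(Z)$, and every connected component of $H:=G_{U\setminus N(Z)}$ containing an odd cycle. (For $U$ with $U=N(Z)$ one has $\calD^*(U)=\{\emptyset\}$ and $\astab(I_G,U)=1$, so such $U$ trivially respect the claimed bound; sets $U$ for which $Z$ fails to be independent contribute nothing because $\calD^*(U)=\emptyset$.)

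Next I would analyse $H$ component by component. Let $C_1,\dots,C_t$ be the connected components of $H$. Each $C_j$ is an induced subgraph of $G$ and by hypothesis contains an odd cycle, whose length, by the definition of $k$, is at least $2k-1$. Since no edge lying on any cycle can be a bridge, every one of the $\geq 2k-1$ edges of the shortest odd cycle inside $C_j$ is a non-bridge of $C_j$. Consequently
$$\psi(C_j)\leq \card(E(C_j))-(2k-1),$$
so that $\card(E(C_j))+\psi(C_j)\leq 2\,\card(E(C_j))-(2k-1)$.

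Summing over $j=1,\dots,t$ and using $t\geq 1$ together with $\card(E(H))\leq \card(E(G))$ yields
$$\card(E(H))+\psi(H)\leq 2\,\card(E(H))-t(2k-1)\leq 2\,\card(E(G))-(2k-1).$$
Plugging this into the bound from Theorem~\ref{bound-a-d-stab} gives
$$\astab(I_G)\leq \tfrac{1}{2}\bigl(2\,\card(E(G))-(2k-1)-1\bigr)+1 = \card(E(G))-k+1,$$
which is the desired inequality.

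There is no real obstacle here: the only point to be careful about is legitimising the inequality $\psi(C_j)\leq \card(E(C_j))-(2k-1)$ when $C_j$ is a single odd cycle (then $\psi(C_j)=0$ and $\card(E(C_j))\geq 2k-1$, so it still holds), and handling the edge case $U=N(Z)$ separately as indicated above. Everything else is arithmetic on top of the previously established bound.
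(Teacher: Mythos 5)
Your proof is correct and follows essentially the same route as the paper: invoke Theorem~\ref{bound-a-d-stab} and then bound $\card(E(G_{U\setminus N(Z)}))$ by $\card(E(G))$ and $\psi(G_{U\setminus N(Z)})$ by $\card(E(G))-(2k-1)$. The only difference is that the paper asserts the inequality on $\psi$ without comment, whereas you justify it carefully by going component by component and observing that the $\geq 2k-1$ edges on each component's odd cycle cannot be bridges --- a worthwhile clarification that the paper leaves implicit.
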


\begin{proof} With the notation of  Theorem \ref{bound-a-d-stab}, we have
$$\card(E(G_{U\setminus N(Z)}))\leq \card(E(G)), \psi (G_{U\setminus N(Z)})\leq \card(E(G))-(2k-1).$$ Hence  
$$ \astab(I_G)\leq  \frac{1}{2}(\card(E(G))+\card(E(G))-(2k-1)-1)+1=\card(E(G))-k+1,$$ 
as required.
\end{proof}

\medskip
Let  $G$ be a cycle of the length $2k-1$. Then $\card(E(G))-k+1=k.$ It is well known  that  
$ \astab(I_G)=\dstab(I_G)=k$. Therefore one can see from Theorem \ref{bound-astab}  that our bound is tight.

\begin{ex}
Let $G=(V,E)$ be a graph with the vertex set $V=\{a_1,a_2,\ldots,a_{2n+1}\}$, which is the union of $n$ triangles with a common vertex $a_1$  (See Figure \ref{Fig.6-0}). Apply Theorem \ref{astab-main}, let $U\subset V,U\not= V$ such that $\calD^*(U)\not=\emptyset$ and $\calD^*(U)\not=\{\emptyset\}.$  Note that   $Z=V\setminus U$ is non-empty independent set  with  $a_1\in N(Z)$.  So  every subset  $W\subset V\setminus(N(Z)\cup Z)$ can not contain  an odd cycle. Hence  $U=V$ is the  unique set  such that $\calD^*(U)\not=\emptyset, \calD^*(U)\not=\{\emptyset\}.$ As a consequence we have that $\Ass (I^{i}_G)=\Ass (I_G)\cup \{\m\} $ for $i\geq 2.$ Hence $\astab(I_G)=\dstab(I_G)=2$.
\begin{figure}[!ht] 
\centering
\includegraphics[height=6cm,width=8cm]{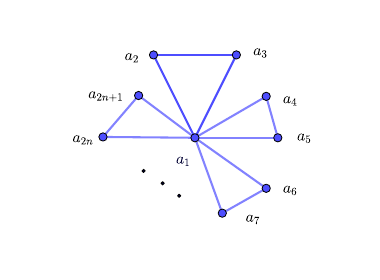}
\caption{Union of triangles } \label{Fig.6-0}
\end{figure} 
\end{ex}

\begin{ex}
Let $G_1=(V_1,E_1)$ be a graph as in the Figure \ref{Fig.6} with the vertex set  $V_1=\{a,b,c,d,e,f,g,h,i,j,k,l,m,n\}$. Then the graph $G$ of Figure  \ref{Fig.1} is $G_1$-dominant* and gives the minimum in Theorem \ref{astab-main}. So we have $\dstab(I_{G_1})=6$.
\begin{figure}[!ht]
\centering
\includegraphics[height=6cm,width=8cm]{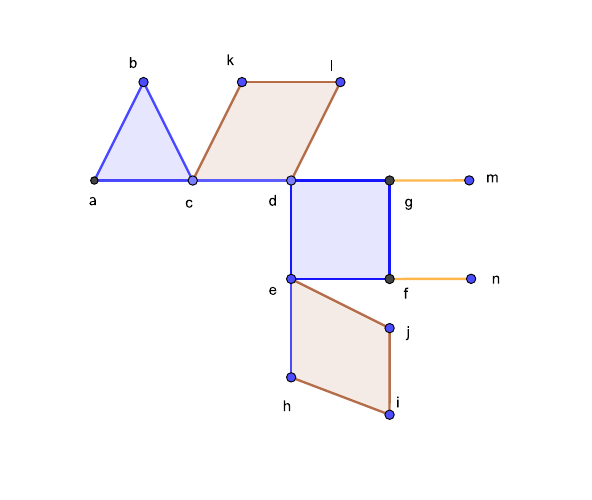}
\caption{$G$ and $G_1$ } \label{Fig.6}
\end{figure} 
\noindent The irreducible component $(a^2, b^2, c^3, d^3, e^3, f^2, g^2, h^2, i, j, k, l, m, n)$ appears in the irreducible decomposition of $I_{G_1}^6$.
\end{ex}

\begin{ex}
Let $G=(V,E)$ be the graph with the vertex set $V=\{a_1,a_2,\ldots,a_{13}\}$ and $E=\{a_{i}a_{i+1}\mid i=1,\ldots,10\}\cup \{a_{1}a_{11},a_{1}a_{12},a_{1}a_{13},a_{12}a_{13} \}.$
Note that $G$ is the union of a triangle  and a hendecagon    represented in Figure  7.\\
Let $W=\{a_1,a_2,\ldots,a_{10},a_{11}\} $ and $G_{W}$ be the induced subgraph of $G$. We can see that  $G_{W}$  is factor-critical and $G$-dominant*  with $\nu(G_{W})=5$.   We have by Theorem \ref{astab-main}  that $\dstab(I_G)=6$.\\
Now we look for embedded associated primes strictly contained in the maximal ideal $\m ^{{\bf 1}_{V}}$. By Theorem \ref{astab-main}, we should look for non-maximal independent sets $Z\subset V$ such that $\calD^*(U)\not=\emptyset$ where $U=V\setminus Z$, that is  there exists a set $W\subset V\setminus (Z\cup N(Z))$  for which every connected component of $G_W$ contains an odd  cycle. In our example, this implies that $W$ is connected and $Z\subset \{a_3,a_4,\ldots,a_{10}\}$. Since  $\astab(I_G, U)$ is determined  by the number of edges in $W$, we have  $\astab(I_G, U)$ will be  the largest possible if $Z$ has cardinality one. We can examine all $8$ cases where $Z_i=\{a_{i}\}$, for $i=3,\ldots,10.$ \\
Indeed, let $U_i=V\setminus Z_i$. We will see by Theorem \ref{astab-main} that $\astab(I_G, U_i)=7$ for $i=4,\dots,9$ and $\astab(I_G, U_i)=8$ for $i=3,10$.  For instance we consider the cases $Z_6=\{a_{6}\}, Z_3=\{a_{3}\} $ since the other cases are similar.
\begin{enumerate}
	\item If $Z_6=\{a_{6}\}$ then  $N(Z_6)=\{a_{5}, a_{7}\}$. So  $W_6=\{a_1,a_2,a_3,a_9,a_{10},a_{11},a_{12},a_{13}\} $ is  $U_6$-dominant*   and we have by Theorem \ref{astab-main} that  $\astab(I_G, U_6)=7$.
	\item If $Z_3=\{a_{3}\}$ then similarly  we have $N(Z_3)=\{a_{2}, a_{4}\}$. We can see that the set     $W_3=\{a_1,a_6,a_7,a_8,a_9,a_{10},a_{11},a_{12},a_{13}\} $ is  $U_3$-dominant* and we also have by Theorem \ref{astab-main} that $\astab(I_G, U_3)=8$. 
\end{enumerate}
Finally we get  $\astab(I_G)=8$. 
\begin{figure}[h!tbp]
\centering
\includegraphics[height=6cm,width=8cm]{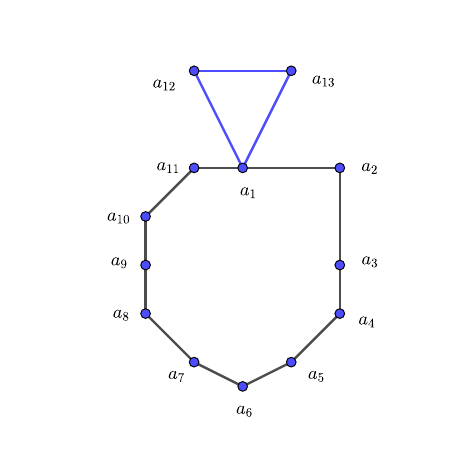}
\caption{graph $G$ } \label{Fig.7}
\end{figure}
\begin{figure}[h!tbp]
\centering
\includegraphics[height=6cm,width=12cm]{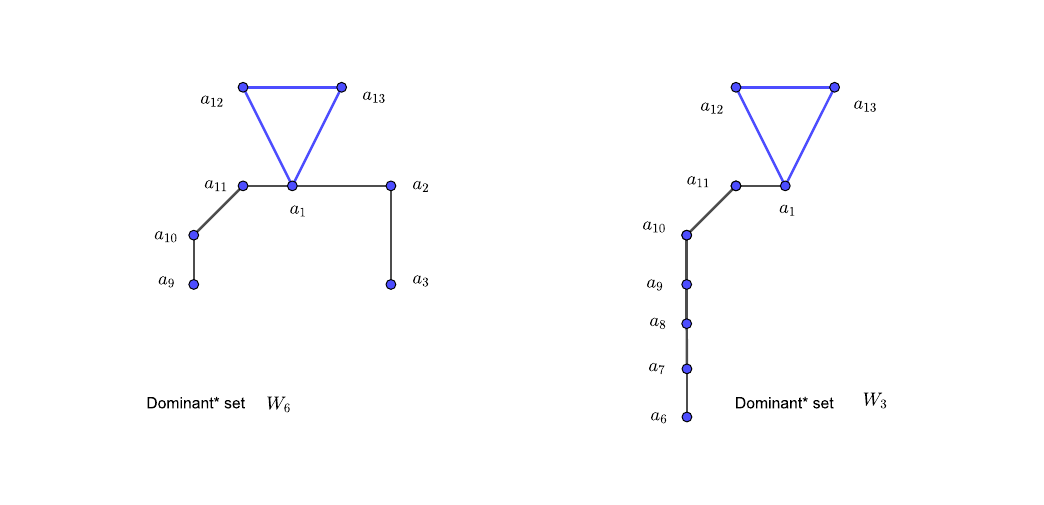}
\caption{Dominants* sets} \label{Fig.8}
\end{figure}
\end{ex}

\medskip
Now we can do a more general case.

\begin{ex}
Let $G=(V,E)$ be a graph with the vertex set $V=\{a_1,a_2,\ldots,a_{2l+3}\}$ and $E=\{a_{i}a_{i+1}\mid i=1,\ldots,2l\}\cup \{a_{1}a_{2l+1},a_{1}a_{2l+2},a_{1}a_{2l+3},a_{2l+1}a_{2l+3} \}.$ 
Then   $\dstab(I_G)=l+1, \astab(I_G)=2l-2.$\\
 From figure  9,  we can see that $G$ is the union of a triangle  and a polygon with $2l+1$ vertices.  
Let $W=\{a_1,a_2,\ldots,a_{2l+1}\} $ and $G_{W}$ be the induced subgraph of $G$. Then $G_{W}$  is factor-critical and $G$-dominant*, with $\nu(G_{W})=l$. By Theorem \ref{astab-main} we have  
 $\dstab(I_G)=l+1$.\\ 
Now we look for embedded associated primes strictly contained in the maximal ideal $\m ^{{\bf 1}_{V}}$. By the theorem \ref{astab-main} we should look for non-maximal independent sets $Z\subset V$  such that $\calD^*(U)\not=\emptyset$ where $U=V\setminus Z$, that is  there exists a set $W\subset V\setminus (Z\cup N(Z))$  for which every connected component of $G_W$ contains an odd cycle, in our example this implies that $W$ is connected and $Z\subset \{a_3,a_4,\ldots,a_{2l}\}$. $\astab(I_G, U)$ is determined  by the number of edges in $W$, so that  $\astab(I_G, U)$ will be  the largest possible when $Z$ has cardinal one. Let examine all cases.\\
Let $Z_i=\{a_{i}\}, U_i=V\setminus Z_i$, we will see by theorem \ref{astab-main} that $\astab(I_G, U_i)=2l-3$ for $i=4,\dots,{2l-1}$ and $\astab(I_G, U_i)=2l-2$ for $i=3,2l$.\\
To be more precise let consider $Z_{l+1}=\{a_{l+1}\}$ and $ Z_3=\{a_{3}\}$, the other cases are similar.\\  We have $N(Z_{l+1})=\{a_{l}, a_{l+2}\}$. So that $W_{l+1}=\{a_1,\ldots,a_{l-2},a_{l+4},\ldots,a_{2l+3}\} $ is   $U_{l+1}$-dominant*, so by theorem \ref{astab-main} we have $\astab(I_G, U_{l+1})=2l-3$.\\ 
Consider $Z_3=\{a_{3}\}$, we have $N(Z_3)=\{a_{2}, a_{4}\}$. So that $W_3=\{a_1,a_6,\ldots,a_{2l+3}\} $ is  $U_3$-dominant*, so by theorem \ref{astab-main} we have $\astab(I_G, U_3)=2l-2$. Finally we get  $\astab(I_G)=2l-2$.
\begin{figure}[h!tbp]
\centering
\includegraphics[height=6cm,width=8cm]{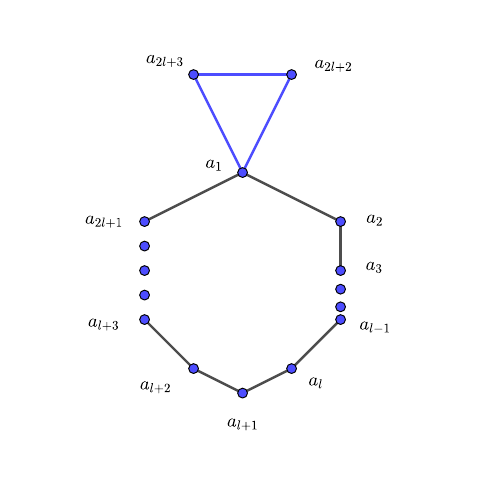}
\caption{Graph $G$ with $2l+3$vertices  } \label{Fig.9}
\end{figure}
\begin{figure}[h!tbp]
\centering
\includegraphics[height=6cm,width=12cm]{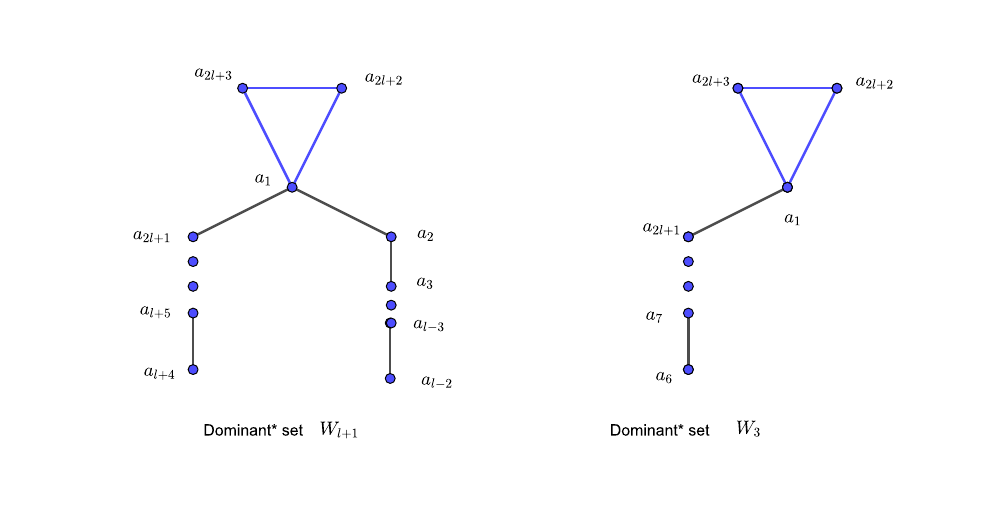}
\caption{Dominants* sets} \label{Fig.10}
\end{figure}
\end{ex} 
\section{$G$ has an optimal ear decomposition starting with an odd cycle.}
The following Lemma is due to Frank \cite{F}.
\begin{lem}\label{l-frank} For every edge $f$ of a 2-edge connected graph $G$ there exists an optimal ear decomposition of $G$ such that the first ear uses $f$.
\end{lem}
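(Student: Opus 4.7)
My plan is to prove the lemma by starting from any optimal ear decomposition of $G$ and transforming it, via local exchanges that preserve the count of even ears, until the first ear contains the prescribed edge $f$. First, take an optimal ear decomposition $F_0, F_1, \ldots, F_s$ with exactly $\varphi(G)$ even ears. By Whitney's theorem (Theorem \ref{t-Withney}), we may choose to begin the construction of $G$ at either endpoint of $f$, so we may arrange $F_0$ to be a cycle: in a 2-edge connected graph, the first proper ear added to a single starting vertex is necessarily closed, hence a cycle through that vertex.

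If $f \in F_0$ we are done, so assume $f$ lies in some $F_i$ with $i \geq 1$, chosen to be the smallest such index. Write $F_i$ as a path from $a$ to $b$ with endpoints in $G_{i-1} := F_0 \cup \cdots \cup F_{i-1}$ (in the closed case $a=b$), with $f$ one of its edges. Since $G_{i-1}$ is 2-edge connected and contains the cycle $F_0$, there are two internally disjoint paths between any pair of vertices in $G_{i-1}$. Combine the portion of $F_i$ containing $f$ with a suitable path through $G_{i-1}$ to form a new cycle $C^*$ through $f$, and re-decompose the remaining edges of $F_0 \cup F_i$ as one or more replacement ears. Promote $C^*$ to be the new first ear, reattach $F_2, \ldots, F_s$ appropriately, and verify that the result is a valid ear decomposition.

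The main obstacle is parity control. The parities of the newly formed ears depend on how the exchange cuts up $F_0$ and $F_i$, and a careless choice can increase the total number of even ears, violating optimality. To keep the even-ear count at $\varphi(G)$, $C^*$ and the complementary ears must be chosen so that their parities match those of $F_0$ and $F_i$; note that the total length $\card(E(F_0)) + \card(E(F_i))$ is preserved by a clean cut-and-paste, so parity is governed by how the cuts split the two ears. One can exploit the cycle structure of $F_0$, which provides paths of both parities between two of its vertices when it is odd, to make a correct choice. Alternatively, and likely more cleanly, one invokes a min-max formula for $\varphi(G)$ in the style of Frank \cite{F} and Szigeti \cite{S} referenced in Section 2: the dual optimum is independent of any distinguished edge, so an optimal ear decomposition can always be anchored at a prescribed $f$. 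I expect the parity bookkeeping of the direct exchange to be the decisive technical step, and the min-max route the more transparent one.
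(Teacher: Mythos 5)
The paper does not actually prove this lemma: it is stated as ``due to Frank \cite{F}'' and used as a black box in the proof of Theorem~\ref{t-equal-phi}, so there is no in-paper argument to compare against. Judged on its own, your proposal is a plan rather than a proof, and it has two real gaps.

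First, the exchange step is undersold. Your cycle $C^*$ combines (part of) $F_i$ with a path $P$ through $G_{i-1}$; unless $a$ and $b$ both lie on $F_0$, the path $P$ will run through several of the earlier ears $F_1,\ldots,F_{i-1}$, not just $F_0$. Once $C^*$ is promoted to be the first ear, what must be re-decomposed (in a valid order, each new ear attaching to the part already built) is all of $E(F_0)\cup\cdots\cup E(F_i)\setminus E(C^*)$, not merely ``the remaining edges of $F_0\cup F_i$.'' The induced fragmentation across $F_1,\ldots,F_{i-1}$ is exactly where uncontrolled even ears appear, so this cannot be waved away. Second---and this is the decisive point---the parity bookkeeping you flag as ``the decisive technical step'' is left entirely unresolved. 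Without the optimality clause the lemma is an easy consequence of Whitney's theorem; the entire content of Frank's result is that the re-anchoring can be done without increasing the number of even ears. You correctly sense that the clean route is the min-max formula for $\varphi(G)$ via conservative weightings and $T$-joins (this is indeed how Frank argues), but you invoke it only by name: you neither state the min-max theorem nor show how its dual object certifies an $f$-anchored optimal decomposition. As it stands, the proposal identifies the right obstacle and points at the right tool but does not close the gap.
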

\begin{thm}\label{t-equal-phi} Let $G$ be a 2-edge connected non bipartite graph. Then there exists an optimal ear decomposition of $G$ starting with an odd cycle. As a consequence $\varphi(G)_{odd} =\varphi (G)$. 
\end{thm}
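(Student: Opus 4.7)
The plan is to invoke Frank's Lemma (Lemma~\ref{l-frank}) with a carefully chosen starting edge and then, if the resulting first ear is even, to rearrange the decomposition by a parity-preserving swap. Since $G$ is non bipartite, it contains an odd cycle $C$; I fix any edge $f\in C$ and apply Lemma~\ref{l-frank} to $f$ to obtain an optimal ear decomposition $(F_0,F_1,\dots,F_s)$ of $G$ with $f\in F_0$. If $F_0$ is already an odd cycle we are done and $\varphi_{\rm odd}(G)\leq\varphi(G)$ holds immediately. So I will assume henceforth that $F_0$ is an even cycle containing $f$.

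Let $j\geq 1$ be the smallest index for which $G_j:=F_0\cup\dots\cup F_j$ is non bipartite; such a $j$ exists because $G_0=F_0$ is an even cycle (hence bipartite) while $G_s=G$ is not. The graph $G_{j-1}$ then admits a proper $2$-coloring. A parity analysis shows that $F_j$ falls into one of two cases: either $F_j$ is a closed ear of odd length (so $F_j$ is itself an odd cycle, which I denote $C^*$), or $F_j$ is an open ear from $u$ to $v$ and, for any $u$--$v$ path $P$ in $G_{j-1}$, the cycle $C^*:=F_j\cup P$ has odd length. In the open case the parity of $F_j$ is forced by the $2$-coloring of $G_{j-1}$: $F_j$ is odd iff $u,v$ lie in the same color class, and even iff they lie in different classes.

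I will then construct a new optimal decomposition starting with the odd cycle $C^*$. Using Whitney's Theorem~\ref{t-Withney}, I begin at a vertex of $C^*$ and take $F_0':=C^*$. The edges of $F_0\cup\dots\cup F_j$ that do not lie in $C^*$ are to be reorganized as a sequence of new ears attached successively to $C^*$: in the closed-ear case this amounts to making $F_0$ a closed ear at the relevant attachment vertex (or, when that vertex is internal to some intermediate $F_i$, splitting $F_i$ along it, as in the local surgeries already used in Lemma~\ref{l-makefc}), while in the open-ear case the ``complementary'' pieces $F_0\setminus P$ and the remaining edges of $F_0\cup\dots\cup F_{j-1}$ relative to $P$ are split into ears by a local induction. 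A short parity count then shows that the even cycle $F_0$ is traded for the odd cycle $C^*$ together with at most one new ear of complementary parity, so the total number of even ears is unchanged. Finally, the unchanged tail $F_{j+1},\dots,F_s$ is appended in the original order. The result is an optimal ear decomposition of $G$ starting with the odd cycle $C^*$, which yields $\varphi_{\rm odd}(G)\leq\varphi(G)$; the reverse inequality is immediate from the definitions, so the equality $\varphi_{\rm odd}(G)=\varphi(G)$ follows.

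The main obstacle will be the exchange step when the $u$--$v$ path $P$ spans several of the earlier ears $F_i$, $i<j$, rather than lying inside $F_0$ alone: decomposing $F_0\cup\dots\cup F_{j-1}$ relative to $P$ into a well-defined new sequence of ears, while keeping the count of even ears invariant, requires local exchange arguments of the type developed in Frank~\cite{F} and Szigeti~\cite{S}. I expect that an induction on the number of earlier ears that $P$ meets (reducing to the case where $P$ lies inside a single previous ear, where the parity bookkeeping is essentially the one already carried out in Section~3) will close this gap.
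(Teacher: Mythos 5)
Your proposal correctly identifies the minimal index $j$ with $G_j$ non bipartite and the fact that the new edges in $F_j$ together with a path in the bipartite graph $G_{j-1}$ produce an odd cycle $C^*$. But the crucial step — turning that observation into an \emph{optimal} ear decomposition of $G_j$ that actually starts with $C^*$ — is exactly what you leave open. Whitney's theorem only gives you \emph{some} decomposition starting at $C^*$, not an optimal one, and the ``parity-preserving swap'' you sketch (splitting earlier ears $F_0,\dots,F_{j-1}$ along the path $P$) has no a priori reason to keep the number of even ears unchanged; indeed, the single local-surgery step in Lemma~\ref{l-makefc} shows that identifying a vertex can \emph{increase} $\varphi+\psi$ by one, so chaining several such splits need not preserve the count. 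You flag this yourself as ``the main obstacle,'' and as written the argument does not close it. Your opening move (applying Lemma~\ref{l-frank} to an edge $f$ chosen on some odd cycle of $G$) also buys you nothing: Frank's lemma only puts $f$ on the first cycle of the decomposition, and that cycle can still be even.

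The paper's proof avoids the surgery entirely through a different two-pronged structure. It inducts on $\card(V(G))$. Given an optimal decomposition $F_1,\dots,F_s$ and the minimal $j$ with $G_j$ non bipartite, it uses that any optimal decomposition of $G_j$ extends to an optimal decomposition of $G$ by appending $F_{j+1},\dots,F_s$ (since $\varphi(G_j)=\varphi(G)-\chi_j$). If $\card(V(G_j))<\card(V(G))$, the induction hypothesis immediately provides an optimal decomposition of $G_j$ starting with an odd cycle and one is done. The hard case $\card(V(G_j))=\card(V(G))$ is handled not by surgery but by a second, \emph{targeted} application of Lemma~\ref{l-frank}: the paper exhibits a single edge $f$ of $F_j$ and a spanning bipartite subgraph $G'$ of $G_j$ (obtained by removing $f$ and extending the 2-coloring of $G_{j-1}$ across the internal vertices of $F_j$) whose bipartition places both endpoints of $f$ in the same color class. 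Then \emph{every} cycle of $G_j$ through $f$ is odd, so the first cycle of the Frank decomposition of $G_j$ through $f$ is automatically an odd cycle and no exchange argument is needed. This is the idea your proposal is missing: instead of rebuilding the decomposition around $C^*$ by hand, choose $f$ so that the parity of the first ear is forced, and let Lemma~\ref{l-frank} do the work.
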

\begin{proof}The proof will be by induction on the number of vertices of $G$. Suppose $\card(V(G))=3$ then  $G=K_3$, our claim is true. 
Suppose $\card(V(G))=d$,  $d\geq 4$. By induction hypothesis our claim is true for all 2-edge connected graphs $G'$ with $\card(V(G'))<d$.  Let $F_1,F_2,\ldots,F_{s}$ be an optimal ear decomposition of $G$ where $F_1$ is a cycle.  For  $i=1,\ldots,s$ let $G_i$ be the graph with ear decomposition $F_1,\ldots,F_i$, note that by Withney's theorem \ref{t-Withney} $G_i$ is a 2-edge connected graph. If $F'_1,\ldots,F'_{i}$ is an optimal ear decomposition of $G_i$ then $F'_1,\ldots,F'_{i},F_{i+1},\ldots,F_{s}$ is an optimal ear decomposition of $G$. Let $\chi _i$ denote the number of even ears among $F_{i+1},\ldots,F_{s}$, we have  $\varphi (G_i)=\varphi (G)- \chi_i$.\\  If $F_1$ is odd our claim is done. So suppose that $F_1$ is an even cycle, let us recall that an even ccle is bipartite. Since $G$ is non bipartite there exists a small integer $j>1$ such that $G_j$ is non bipartite. If $\card(V(G_j))<d$ then by the induction hypothesis $G_j$ has an optimal  ear decomposition starting with an odd cycle $F'_1,\ldots,F'_{j}$. Hence $F'_1,\ldots,F'_{j},F_{j+1},\ldots,F_s$ is an  optimal  ear decomposition of $G$ starting with an odd cycle.\\
 If $\card(V(G_j))=d$ then  $G_{j-1}$ is a bipartite graph with a partition $A,B$. We have two cases either $F_{j}$ has one or more edges. \\
 \begin{enumerate}
\item If $F_{j}$ has one edge $f$, then $V(G_{j-1})=V(G), E(G_j)=E(G_{j-1})\cup \{f\}$ where $f=PQ, P,Q\in A\cup B$. If $P\in A, Q\in B$ then $G_j$ is bipartite, hence we can assume that $P,Q\in A$. By  lemma  \ref{l-frank} there exists an optimal ear decomposition of $G_j$: $F'_1,\ldots,F'_{j}$ starting with a cycle $F'_1$ that uses $f$. All edges of $F'_1$ except $f$ belongs to the bipartite graph $G_{j-1}$, but any path in a bipartite graph between  $P,Q\in A$ is even hence $F'_1$ is an odd cycle. It follows that $F'_1,\ldots,F'_{j},F_{j+1},\ldots,F_{s}$ is an optimal ear decomposition of $G$ starting with an odd cycle.
\item  If $F_{j}=P_{i_1}P_{i_2}\ldots P_{i_l}$ with $l\geq 3$, then $V(G_{j-1})=V(G), E(G)=E(G_{j-1})\cup \{P_{i_1}P_{i_2},P_{i_2}P_{i_3},\ldots,P_{i_{l-1}}P_{i_l}\}$ and $P_{i_1},P_{i_l}\in A\cup B,  \deg_{G_j}(P_t)=2$  for $1<t<l$. We have four cases.
   \begin{enumerate}
\item If $P_{i_1}\in A, P_{i_l}\in B$ and $l$ is even then we set $ A'=A\cup \{P_{i_1},P_{i_3},  \ldots,P_{i_{l-1}}\}, B'=B\cup \{P_{i_2}, \ldots,P_{i_{l}}\}$ hence  $G_j$ is bipartite with partition $A',B'$, a contradiction. 
\item If $P_{i_1}\in A, P_{i_l}\in B$ and $l$ is odd then we set $ A'=A\cup \{P_{i_j}, j\in 2\fn\}, B'=B\cup \{P_{i_j}, j\notin 2\fn, j\not=1\}$. Let $G'$ be the graph with  $V(G')=V(G_j), E(G')=E(G_j)\setminus \{ P_{i_1}P_{i_2}\}$,  hence  $G'$ is bipartite with partition $A',B'$. By  lemma  \ref{l-frank} there exists an optimal ear decomposition of $G_j$ starting with a cycle $C$ that uses $f=P_{i_1}P_{i_2}$. All edges of $C$ except $f$ belongs to the bipartite graph $G'$, but any path in the bipartite graph $G'$ between  $P_{i_1}, P_{i_2}\in A'$ is even hence $C$ is an odd cycle. Now as before we add the ears $F_{j+1},\ldots,F_{s}$ to the optimal ear decomposition of $G_j$, so that we get  an optimal ear decomposition of $G$ starting with an odd cycle.
\item If $P_{i_1},P_{i_l}\in A$ and $l$ is odd then we set $ A'=A\cup \{P_{i_1},P_{i_3},  \ldots,P_{i_{l}}\}, B'=B\cup \{P_{i_2}, \ldots,P_{i_{l-1}}\}$ hence  $G_j$ is bipartite with partition $A',B'$, a contradiction. 

\item If $P_{i_1},P_{i_l}\in A$ and $l$ is even then we set $ A'=A\cup \{P_{i_j}, j\in 2\fn\}, B'=B\cup \{P_{i_j}, j\notin 2\fn, j\not=1\}$. Let $G'$ be the graph with  $V(G')=V(G_j), E(G')=E(G_j)\setminus \{ P_{i_1}P_{i_2}\}$,  hence $G'$ is bipartite with partition $A',B'$. By  lemma  \ref{l-frank} there exists an optimal ear decomposition of $G_j$ starting with a cycle $C$ that uses $f=P_{i_1}P_{i_2}$. All edges of $C$ except $f$ belongs to the bipartite graph $G'$, but any path in the bipartite graph $G'$ between  $P_{i_1}, P_{i_2}\in A'$ is even hence $C$ is an odd cycle. Now as before we add the ears $F_{j+1},\ldots,F_{s}$ so that we get  an optimal ear decomposition of $G$ starting with an odd cycle.
 \end{enumerate} \end{enumerate}
\end{proof}

\end{document}